\newcommand\hol{\operatorname{hol}}
\newcommand{\x}{\pmb{x}}
\newcommand{\y}{\pmb{y}}
\def\myPOS#1{\POS}
\def\C{\mathbb C}
\def\H{\mathbb H}
\def\Q{\mathbb Q}
\def\R{\mathbb R}
\def\T{\mathbb T}
\def\Z{\mathbb Z}
\def\cH{\mathcal H}
\def\cL{\mathcal L}
\def\cT{\mathcal T}
\def\cV{\mathcal V}
\def\cT{\mathcal T}
\newcommand{\ra}{\longrightarrow}
\newcommand{\la}{\leftarrow}
\newcommand{\into}{\hookrightarrow}
\def\wt{\widetilde}
\def\={:=}
\def\doublearrow#1#2
\quad\raisebox{.1cm}{$\overset{#1}\lra$}  
\def\Ext{\operatorname{Ext}}
\def\Aut{\operatorname{Aut}}
\def\Ker{\operatorname{Ker}}
\def\d{\partial}
\def\Mor{\operatorname{Mor}}
\def\Hom{\operatorname{Hom}}
\def\nb-{\nobreakdash-}
\def\:{\colon}
\def\iso{\cong}
\def\-1{^{-1}}
\def\o{\circ}
\numberwithin{equation}{section}
\theoremstyle{plain}
\newtheorem{thm}[equation]{Theorem}
\newtheorem{prop}[equation]{Proposition}
\newtheorem{lemma}[equation]{Lemma}
\newtheorem{cor}[equation]{Corollary}
\theoremstyle{definition}
\newtheorem{defn}[equation]{Definition}
\newtheorem{example}[equation]{Example}
\theoremstyle{remark}
\newtheorem{rem}[equation]{Remark}
\definecolor{darkgray}{gray}{0.4}
\newcommand\phalf{\frac{p_1}{2}}
\renewcommand\o{\circ}
\newcommand\st{\>\> \Big| \>\>}
\def\Image{\operatorname{Image}}
\renewcommand\la{\langle}
\renewcommand\ra{\rangle}
\newcommand\from{\leftarrow}
\newcommand\Ob{\operatorname{Ob}}
\renewcommand\L{\Lambda}
\renewcommand\cL{\Uplambda}
\renewcommand{\cT}[1]{\mathcal{T}_{\operatorname{geo}} (#1)}
\newcommand{\cTt}[2][a]{\mathcal{T}_{\operatorname{top}}(#2)}
\renewcommand{\T}[1]{T_{\operatorname{geo}}(#1)}
\newcommand{\Tt}[2][a]{T_{\operatorname{top}}({#2})}
\newcommand{\Tc}[2][a]{T_{\operatorname{curv}}(#2)}
\newcommand{\Th}[2][a]{T_{\operatorname{dR}}(#2)}
\renewcommand{\H}[4][M]{\check{H}^{#2}(#1; #4)}
\renewcommand{\cH}[4][M]{\check{\mathcal{H}}^{#2}(#1; #4)}
\newcommand{\Cbg}[5][M]{\check{C}(#2)^{#3}(#1;#5)}
\newcommand{\Hbg}[5][M]{\check{H}(#2)^{#3}(#1;#5)}
\newcommand{\Zbg}[5][M]{\check{Z}(#2)^{#3}(#1;#5)}
\newcommand{\Cbgc}[5][M]{\check{C}(#2)^{#3}_c(#1;#5)}
\newcommand{\cHc}[4][M]{\check{\mathcal{H}}_c^{#2}(#1; #4)}
\renewcommand\l{\check{\lambda}}
\newcommand\s{\check{s}}
\renewcommand\x{\check{x}}
\renewcommand\y{\check{y}}
\newcommand\curv{\operatorname{curv}}
\begin{document}
\title{Trivializations of differential cocycles}
\author{Corbett Redden}
\address{Department of Mathematics, Michigan State University, East Lansing, MI 48824}
\email{redden@math.msu.edu}
\curraddr{Max-Planck-Institut f\"ur Mathematik, Vivatsgasse 7, 53111 Bonn, Germany}
\thanks{This work was partially supported by the NSF RTG Grant DMS-0739208.}

\subjclass[2010]{Primary 53C08; Secondary 58J28}

\begin{abstract}
Associated to a differential character is an integral cohomology class, referred to as the characteristic class, and a closed differential form, referred to as the curvature.  The characteristic class and curvature are equal in de Rham cohomology, and this is encoded in a commutative square.  In the Hopkins--Singer model, where differential characters are equivalence classes of differential cocycles, there is a natural notion of trivializing a differential cocycle.  In this paper, we extend the notion of characteristic class, curvature, and de Rham class to trivializations of differential cocycles.  These structures fit into a commutative square, and this square is a torsor for the commutative square associated to characters with degree one less.  Under the correspondence between degree 2 differential cocycles and principal circle bundles with connection, we recover familiar structures associated to global sections.
\end{abstract}

\maketitle
\thispagestyle{empty}


\section{Introduction}\label{sec:Intro}

The differential cohomology groups $\H{k}{\R}{\Z}$, defined for a smooth manifold $M$, are a hybrid of ordinary integral cohomology and differential forms.  More precisely, there is a functorially defined commutative square
\begin{equation}\label{eq:fundsquare} \xymatrix{  \H{k}{\R}{\Z} \ar[d] \ar[r] & H^k(M;\Z) \ar[d]\\
\Omega^k(M)_\Z \ar[r] &H^k(M;\R)_\Z ,} \end{equation}
where $\Omega^k(M)_\Z$ are smooth forms with $\Z$-periods, and $H^k(M;\R)_\Z \subset H^k(M;\R)$ is the image of $H^k(M;\Z) \to H^k(M;\R)$.  All four homomorphisms are surjective.  To an element in $\H{k}{\R}{\Z}$, the induced elements in $H^k(M;\Z)$, $\Omega^k(M)$, and $H^k(M;\R)$ are referred to as the characteristic class, curvature, and de Rham class, respectively.  While the characteristic class and curvature do not completely determine an element in $\H{k}{\R}{\Z}$, they provide a useful way of compartmentalizing the differential cohomology groups.

Cocycle models for differential cohomology allow one to define trivializations.  This idea is not new; for example, Hopkins--Singer emphasize this point in \cite{HS05}, and trivializations play a fundamental role in Freed's paper \cite{Fre00}.  While there are multiple notions of a trivialization, we consider the definition that generalizes the notion of trivializing a $\R/\Z$-bundle with connection by an arbitrary global section.  More generally, a differential cocycle $\x$, representing $[\x] \in \H{k}{\R}{\Z}$, will admit a trivialization if and only if the characteristic class in $H^k(M;\Z)$ vanishes.  The goal of this paper is to extend the characteristic class, curvature, and de Rham class to trivializations of differential cocycles.  This allows one to compartmentalize a trivialization into simpler objects, just as one can partially describe a differential cohomology class using the commutative square \eqref{eq:fundsquare}.

More concretely, let $\x \in \cH{k}{\R}{\Z}$ be a differential cocycle admitting a trivialization.  We let $\cT{x}$  denote the category of trivializations of $\x$, and we let $\T{x}$ denote the set of isomorphism classes of trivializations.  Both of these were previously defined in \cite{HS05}, but this notation was not used.  The subscript ``$\text{geo}$" is due to the fact that $\T{\x}$ is like an affine version of differential characters.  More precisely, $\T{\x}$ is a torsor (principal homogeneous space) for $\H{k-1}{\R}{\Z}$, and elements contain local geometric information.

Because the sets $\T{\x}$ are torsors and not groups, they are intrinsically subtle.  Furthermore, their definition involves the actual cocycle $\x$ as opposed to the differential character in $\H{k}{\R}{\Z}$.  The motivation for this paper is to provide tools and language that allow one to describe classes in $\T{\x}$ using simpler structures.

We define a category $\cTt{\x}$, whose isomorphism classes are denoted $\Tt{\x}$, along with sets $\Tc{\x}$ and $\Th{\x}$.  These sets are where the characteristic class, curvature, and de Rham class of a trivialization live.  While the sets $\T{\x}$ and $\Tt{\x}$ depend on the specific cocycle $\x$, the sets $\Tc{\x}$ and $\Th{\x}$ are defined using only the equivalence class $[\x] \in \H{k}{\R}{\Z}$.  Elements of $\Tc{\x} \subset \Omega^{k-1}(M)$ are differential forms that map to the character $[\x]$ under the natural homomorphism $\Omega^{k-1}(M) \to \H{k}{\R}{\Z}$.  Elements of $\Th{\x}$ are lifts of the differential character from $\Hom(Z_{k-1}(M), \R/\Z)$ to $\Hom(Z_{k-1}(M),\R)$.

Our notation is justified by Theorem \ref{thm:torsorsquare}, which states that the sets $T_{\bullet}(\x)$ naturally fit into a commutative square that is a torsor for the fundamental square \eqref{eq:fundsquare} associated to $\H{k-1}{\R}{\Z}$.
\[ \xymatrix{ \T{\x}  \ar[d] \ar[r]& \Tt[\l]{\x} \ar[d]\\
\Tc[\l]{\x} \ar[r] & \Th[\l]{\x}
} \qquad \qquad
 \xymatrix{ \H{k-1}{V}{\Z} \ar[d]\ar[r]&H^{k-1}(M;\Z) \ar[d]\\
\Omega^{k-1}(M)_\Z \ar[r] &H^{k-1}(M;\R)_\Z }
\]
In other words, the sets $\T{\x}$, $\Tt{\x}$, $\Tc{\x}$, $\Th{\x}$ are torsors over $\H{k-1}{\R}{\Z}$, $H^{k-1}(M;\Z)$, $\Omega^{k-1}(M)_\Z$, $H^{k-1}(M;\R)_\Z$, respectively, and there are natural surjective equivariant maps making the square on the left into a commutative square.

The paper is organized as follows.  Section \ref{sec:Background} contains standard background information on Cheeger--Simons differential characters and the Hopkins--Singer model for differential ordinary cohomology.  The one part not already in the literature is the exact sequence for the change of coefficients in differential cohomology; it appears in Proposition \ref{prop:latticechange} and \eqref{eq:ses0Z}.  Section \ref{sec:TrivOrdinary} is the heart of the paper.  We define the the sets $T_{\bullet}(\x)$ and show they fit into a commutative square of torsors.  Subsection \ref{subsec:Hodge} discusses exactly how the Hodge decomposition of forms fits into differential cohomology.  On a closed manifold, a Riemannian metric induces a right inverse $\Tc{\x} \from \Th{\x}$ that is equivariant with respect to the usual right inverse $\Omega^{k-1}(M)_\Z \from H^{k-1}(M;\R)_\Z$ given by projecting onto harmonic forms.  However, we prove this cannot be refined to give a natural right inverse for the characteristic class or curvature map.  Subsections \ref{subsec:Product} and \ref{subsec:Integration} show that the sets $T_{\bullet}(\x)$ behave as expected with respect to products and integration.  In Section \ref{sec:Examples}, we consider the degree 2 case.  When a cocycle $\x \in \cH{2}{\R}{\Z}$ represents a principal $\R/\Z$-bundle with connection $(P,\Theta) \overset{\pi}\to M$, the sets $T_{\bullet}(\x)$ naturally recover familiar structures associated to global sections of $P$.  

We should note that there exist different cocycle models for various differential cohomology theories.  For example, Deligne--Beilinson cohomology \cite{Bry93} and Harvey--Lawson spark complexes \cite{HL06} give alternate constructions of the groups $\H{*}{\R}{\Z}$.  More generally, Hopkins--Singer define a differential refinement for any generalized cohomology theory \cite{HS05}.  The commutative square of torsors, given by trivializations of a differential cocycle, generalizes to other cocycle models for differential cohomology theories.  We hope to give a broad but concise account of this in the near future.  In this paper, though, we discuss only one model so that the discussion remains both explicit and simple.

There are various reasons to consider trivializations of differential cocycles.  Many geometric structures arising in theoretical physics can be described in terms of differential (generalized) cohomology or trivializations of differential cocycles.  As explained in \cite{Fre00}, trivializations in differential ordinary cohomology are a natural generalization of abelian gauge fields in the presence of a magnetic current, and they often give a mathematical description of anomaly cancellation mechanisms.  In another example, trivializations play an important role in formulating $T$-duality within the context of twisted differential $K$-theory \cite{KahleVal09}.  Also, trivializations are useful when considering lifts of structure groups.  On a $SO$-bundle with connection, lifts to a spin$^c$-bundle with connection are equivalent to trivializations of a differential refinement of the $W_3$ characteristic class.  Similarly, on a $Spin$-bundle with connection, one can formulate geometric string structures as trivializations of a cocycle for the character $\check{\phalf}$.  We discuss both these statements in \cite{Redden12a}, and this definition of geometric string structure was given in \cite{Wal09} using the language of bundle 2-gerbes.

The author wishes to thank both Michigan State University, where most of this research was originally conducted, and the Hausdorff Research Institute for Mathematics, where most of this paper was written.  Thanks also to Alex Kahle, Arturo Prat-Waldron, Konrad Waldorf, Fei Han, and Peter Teichner for helpful comments.


\section{Review of differential cohomology}\label{sec:Background}
We always work in the category of smooth manifolds; objects are smooth manifolds and morphisms are smooth maps.  We also work with the smooth singular chain and cochain complexes, $C_*(M)$ and $C^*(M;-)$, and we denote cycles and cocycles by $Z_*(M)$ and $Z^*(M;-)$.  The bracket $\la\cdot,\cdot \ra$ denotes the pairing between cochains and chains.

Let $V$ be a finite-dimensional vector space, and $\L \subset V$ a completely disconnected subgroup; i.e. the only connected components of $\L$ are points.  Define $H^k(M;V)_\L \subset H^k(M;V)$ to be the image of $H^k(M;\L) \to H^k(M;V)$, and let $\Omega^k(M;V)_\L \subset \Omega^k(M;V)$ denote the closed smooth $V$-valued $k$-forms on $M$ with periods in $\L$.  In other words, $\omega \in \Omega^k(M;V)_\L$ if and only if $\int_z \omega \in \L$ for all smooth cycles $z\in Z_k(M)$.  Hence, $\Omega^k(M;V)_\L$ maps onto $H^k(M;V)_\L$ when quotienting by $d \Omega^{k-1}(M;V)$.

We now introduce the differential (ordinary) cohomology groups $\H{k}{V}{\L}$.  While there exist many different explicit models, they are all isomorphic, a fact proven elegantly in \cite{SS08a}.  We use only the models from Cheeger--Simons \cite{CS85} and Hopkins--Singer \cite{HS05}.  The first, based on differential characters, is the most geometric and emphasizes the idea of holonomy.  The second model, which is given as the cohomology of a cochain complex, allows us to define trivializations.  

\begin{defn}[Cheeger--Simons \cite{CS85}]\label{defn:diffchars}The group of {\it differential characters} of degree $k$,\footnote{This grading is different than the one originally used by Cheeger--Simons.}  with values in $V/\L$, is
\begin{align*} \H{k}{V}{\L} \= \left\{ \chi \in \Hom(Z_{k-1}(M),V/\L) \; \left| \; \parbox{2.3in}{$\exists \> \omega \in \Omega^k(M;V) $ satisfying \\
 $ \la \chi, \partial z\ra = \int_z \omega \mod \L \quad \forall \> z \in C_k(M)$ } \right. \right\}. \end{align*}
\end{defn}

\begin{prop}[\cite{CS85}]\label{prop:DiffCharProps}The groups $\H{k}{V}{\L}$ fit into the commutative square
\begin{equation}\label{eq:FundSquareV} \xymatrix{  \H{k}{V}{\L} \ar[d] \ar[r] & H^k(M;\L) \ar[d]\\
\Omega^k(M;V)_\L \ar[r] &H^k(M;V)_\L ,} \end{equation}
and these homomorphisms determine the short exact sequences
\begin{align} 
\label{eq:exact1} 0 \to H^{k-1}(M;V/\L) \to &\H[M]{k}{V}{\L} \to \Omega^k(M;V)_\L \to 0,  \\
\label{eq:exact2} 0 \to \frac{\Omega^{k-1}(M;V)}{\Omega^{k-1}(M;V)_\L} \to &\H{k}{V}{\L} \to H^k(M;\L) \to 0, \\
\label{eq:exact3} 0 \to \frac{H^{k-1}(M;V)}{H^{k-1}(M;V)_\L} \to &\H{k}{V}{\L} \to A^k(M;\L) \to 0. \end{align}
Here, $A^k (M;\L)$ is defined to be the the pullback in diagram \eqref{eq:FundSquareV}.  The induced homomorphism $H^{k-1}(M;V/\L)\to H^k(M;\L)$, given by \eqref{eq:exact1} and \eqref{eq:exact2}, equals minus the Bockstein homomorphism.  The homomorphism $\Omega^{k-1}(M;V)/\Omega^{k-1}(M;V)_\L \to \Omega^k(M;V)_\L$, induced from \eqref{eq:exact2} and \eqref{eq:exact1}, equals the de Rham differential $d$.
\end{prop}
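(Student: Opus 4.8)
The plan is to construct the two homomorphisms out of $\H{k}{V}{\L}$ at the cochain level and then read off the square \eqref{eq:FundSquareV} and the three sequences by identifying kernels and images. Write $I(\omega) \in C^k(M;V)$ for the cochain $z \mapsto \int_z \omega$. Given $\chi$ as in Definition~\ref{defn:diffchars}, I would first check that the form $\omega$ is unique, closed, and lies in $\Omega^k(M;V)_\L$, which produces the curvature map $\curv\colon \H{k}{V}{\L}\to\Omega^k(M;V)_\L$. For the characteristic class, choose a lift $\tilde\chi\in C^{k-1}(M;V)$ of $\chi$; the defining relation forces $I(\omega)-\delta\tilde\chi$ to be an $\L$-valued cocycle, and I would take its class in $H^k(M;\L)$. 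Independence of the lift holds because two lifts differ by a cochain $\psi$ that is $\L$-valued on $Z_{k-1}(M)$, and since $C_{k-1}(M)/Z_{k-1}(M)$ is free one may extend $\psi|_{Z_{k-1}}$ to an $\L$-valued cochain, so $\delta\psi$ is an $\L$-coboundary and the class is unchanged. Reducing $I(\omega)-\delta\tilde\chi$ to $V$-coefficients gives $[\omega]_{\mathrm{dR}}$, which is exactly the commutativity of \eqref{eq:FundSquareV} and simultaneously fixes the sign convention.

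For \eqref{eq:exact1}, a character with $\curv(\chi)=0$ vanishes on boundaries, hence factors through $H_{k-1}(M)$ and defines an element of $\Hom(H_{k-1}(M),V/\L)$; since $V/\L$ is divisible, the universal coefficient theorem identifies this with $H^{k-1}(M;V/\L)$, giving the kernel. Surjectivity of $\curv$, and more generally surjectivity onto the pullback $A^k(M;\L)$ needed for \eqref{eq:exact3}, I would establish by a single master construction: given $(\omega,u)\in A^k(M;\L)$ with $c_0\in Z^k(M;\L)$ representing $u$ (and writing $u_V$ for the image of $u$ in $H^k(M;V)$), the equality $[\omega]_{\mathrm{dR}}=u_V$ yields $\tilde\chi$ with $I(\omega)-c_0=\delta\tilde\chi$, whence $\chi := \tilde\chi|_{Z_{k-1}}\bmod\L$ has curvature $\omega$ and characteristic class $u$. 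Specializing recovers surjectivity of both $\curv$ and the characteristic class map.

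For \eqref{eq:exact2} I would use the map $\eta\mapsto\chi_\eta$, $\langle\chi_\eta, z\rangle = \int_z\eta\bmod\L$, which has curvature $d\eta$ and trivial characteristic class (take $\tilde\chi = I(\eta)$); the identity $\curv(\chi_\eta)=d\eta$ is the second displayed assertion. Its kernel is $\Omega^{k-1}(M;V)_\L$: complete disconnectedness of $\L$ forces the period function $z\mapsto\int_z\eta$ to be homotopy invariant, hence $\eta$ closed with periods in $\L$. To see the image is the full kernel of the characteristic class, take $\chi$ with trivial class, absorb an $\L$-coboundary into $\tilde\chi$ so that $\delta\tilde\chi=I(\omega)$, conclude $[\omega]_{\mathrm{dR}}=0$ so that $\omega=d\eta$, and observe that $\chi-\chi_\eta$ is then flat with trivial class. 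The first displayed assertion—that flat characters map to minus the Bockstein—follows by representing a flat $\chi$ by a $V/\L$-cocycle, lifting to $V$, and recognizing $\delta\tilde\chi$ as the connecting homomorphism; comparison with the sign convention of the first paragraph produces the minus sign. Feeding this back, $\chi-\chi_\eta$ lies in the kernel of the Bockstein, hence equals $\chi_\alpha$ for a closed form $\alpha$, so $\chi=\chi_{\eta+\alpha}$ lies in the image. Finally \eqref{eq:exact3} follows by combining: the kernel of the map to $A^k(M;\L)$ consists of flat characters with trivial class, i.e.\ the kernel of the Bockstein, which by the long exact sequence of $0\to\L\to V\to V/\L\to 0$ is the image of $H^{k-1}(M;V)$, identified with $H^{k-1}(M;V)/H^{k-1}(M;V)_\L$.

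The diagram chasing, once the two maps are in hand, is routine; the genuine work—and the main obstacle—is the passage between singular cochains, where the character lives, and differential forms, where the curvature lives. Concretely, I expect the crux to be the de Rham--type step ``$I(\omega)$ is an exact cochain and $\omega$ is closed $\Rightarrow$ $\omega=d\eta$ for an actual form $\eta$,'' which underlies the kernel computation for \eqref{eq:exact2}, together with the universal-coefficient identification $\Hom(H_{k-1}(M),V/\L)\cong H^{k-1}(M;V/\L)$ that rests on divisibility of $V/\L$. Keeping the signs in the square and in the Bockstein statement mutually consistent also demands care, though this is bookkeeping rather than a real difficulty.
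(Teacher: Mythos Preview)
The paper does not give its own proof of this proposition; it is quoted as background from \cite{CS85} and stated without argument. Your proposal is correct and is essentially the original Cheeger--Simons proof: construct the curvature and characteristic class at the cochain level via a lift $\tilde\chi$, then identify kernels and images using the de Rham theorem and the universal coefficient theorem (with $V/\L$ divisible). One small point worth tightening is your argument that $\chi_\eta=0$ forces $\eta$ closed: rather than invoking ``homotopy invariance'' of the period map, it is cleaner to note that $\int_w d\eta = \int_{\partial w}\eta \in \L$ for every $k$-chain $w$, and then use the injectivity of $\Omega^k(M;V)\to C^k(M;V/\L)$ (the remark following the proposition) to conclude $d\eta=0$. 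Otherwise the sketch is complete and the sign bookkeeping for the Bockstein is handled correctly.
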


\begin{rem}The notation $\H{k}{V}{\L}$ may seem a bit confusing, since the definition involves both $V$ and $\L$.  However, we view $\L$ not as an abstract group, but as a subgroup of a vector space $V$.
\end{rem}

\begin{rem}There is no real loss in the reader replacing $V$ by $\R$ throughout this paper.  While $V=\R$ and $\L=\Z$ in most examples and in much of the cited literature, the definitions and properties easily generalize.  The key property used is that when $\L$ is totally disconnected, the natural homomorphism
\[ \Omega^k(M;V) \to C^k(M;V/\L), \]
given by integrating and quotienting modulo $\L$, is injective.  This fact can be easily proven pointwise by integrating a $k$-form over an arbitrarily small disk.
\end{rem}

\begin{rem}One can also replace $\L \subset V$ by a totally disconnected subgroup of a $\Z$-graded vector space $\cL \subset \cV$.  This is useful when considering ``Chern characters" in differential generalized cohomology.  For example, the ordinary Chern character in $K$-theory can be refined to a natural transformation
\[ \check{K}^*(M) \overset{\check{\operatorname{ch}}}\longrightarrow \H{*}{\R[\beta^\pm]}{\Q[\beta^\pm]},\]
where $\Q[\beta^\pm] \subset \R[\beta^\pm]$ and $| \beta | = 2$ (\cite{BS09}).
\end{rem}

The homomorphism $\H{k}{V}{\L} \to \Omega^k(M;V)$ is given by the form $\omega$ in Definition \ref{defn:diffchars}.  We call this form the {\it curvature} of $\chi$, and we refer to the class in $H^k(M;\L)$ as the {\it characteristic class} of $\chi$.  Diagram \ref{eq:FundSquareV} implies that the curvature equals the characteristic class in de Rham cohomology.  We refer to the value $\la \chi, z\ra$ as the {\it holonomy} of $\chi$ along $z \in C_{k-1}(M)$.  This terminology is made particularly clear in the second example below.

\begin{example}\label{ex:functions}There is a canonical isomorphism
\[ C^\infty(M, \R/\Z) \iso \H{1}{\R}{\Z} \subset \Hom(Z_0(M), \R/\Z) .\]
To a differential character given by a function $f \in C^\infty(M, \R/\Z)$, the characteristic class is the homotopy class
\[ [f] \in [M, \R/\Z] \iso [M, K(\Z,1)] \iso H^1(M;\Z),\]
and the curvature is the derivative $df \in \Omega^1(M)$.
\end{example}

\begin{example}\label{ex:linebundles}Let $(P\overset{\pi}\to M, \Theta)$ be a principal $\R/\Z$-bundle with connection $\Theta$.  The holonomy of $\Theta$ associates an element in $\R/\Z$ to any closed loop in $M$.  The holonomy is invariant under gauge transformations, inducing an isomorphism
\[ \left\{ \R/\Z \into (P,\Theta) \to M\right\}_{/\sim}\; \overset{\iso}\longrightarrow \;\H{2}{\R}{\Z}. \]
The characteristic class of a degree 2 character is the cohomology class which classifies $P$, and the character's curvature equals the curvature $d\Theta$ of the connection.  More generally, $\H{2}{V}{\L}$ classifies principal $V/\L$-bundles with connection when $V/\L$ is a Lie group.  This fact is used in the discussion of $T$-duality in \cite{KahleVal09}.
\end{example}

\begin{rem}The Lie group isomorphism 
\[ \R/\Z \overset{\exp(2\pi i \cdot)}\longrightarrow U(1)\]
gives an isomorphism between $\H{2}{\R}{\Z}$ and gauge equivalence classes of principal $U(1)$-bundles with connection on $M$.  Under this isomorphism, the characteristic class equals minus the first Chern class, and the curvature equals minus the first Chern form.  Because of this sign, and because the group structures in $\H{1}{\R}{\Z}$ and $\H{2}{\R}{\Z}$ are naturally written additively, it will be easier to work with principal $\R/\Z$-bundles.
\end{rem}

While equating two bundles that are gauge equivalent makes many things simpler, one loses important information.  For example, local cutting and gluing constructions can not be performed on isomorphism classes.  This is due to the existence of non-trivial automorphisms.  In general, one should consider the category of principal $\R/\Z$-bundles with connection on $M$, where the morphisms are bundle maps preserving connection.

For the same reasons, it is useful to consider a category of differential cocycles $\cH{k}{V}{\L}$ whose set of equivalence classes $\pi_0\left( \cH{k}{V}{\L} \right)$ is canonically isomorphic to the groups $\H{k}{V}{\L}$.  We choose to use the following cochain model, constructed by Hopkins--Singer.  The category $\cH{2}{\R}{\Z}$ is equivalent to the category of $\R/\Z$-bundles with connection on $M$, as loosely explained in Section \ref{sec:Examples}.

\begin{defn}[\cite{HS05}]\label{defn:diffcomplex}The {\it differential cochain complex} $\{\Cbg{n}{*}{V}{\L}, d\}$ is given by
\[
\Cbg{n}{k}{V}{\L} = \begin{cases} C^k(M;\L) \times C^{k-1}(M;V) \times \Omega^k(M;V) & k \geq n \\
C^k(M;\L) \times C^{k-1}(M;V) & k < n \end{cases}  \]
with differential
\begin{align*}
d(c,h,\omega) &= (\delta c, \omega - c - \delta h, d\omega) \\
d(c,h) &= \begin{cases} (\delta c, -c-\delta h, 0)  & (c,h)\in \Cbg{k}{k-1}{V}{\L} \\
 (\delta c, - c - \delta h) &\text{otherwise.} \end{cases}
\end{align*}
{\it Differential cocycles} are cocycles in this complex, and $\Hbg{n}{k}{V}{\L}$ is the $k$-th cohomology; i.e.
\begin{align*} \Zbg{n}{k}{V}{\L} &= \{  \x \in \Cbg{n}{k}{V}{\L} \st d\x = 0 \}, \\
\Hbg{n}{k}{V}{\L} &= \Zbg{n}{k}{V}{\L}/ d\Cbg{n}{k-1}{V}{\L}.
\end{align*}
\end{defn}

One can check that for $n \neq k$, the cohomology groups of the differential cochain complex are naturally isomorphic to ordinary cohomology groups.  When $n=k$, we obtain the differential cohomology groups:
\begin{equation}\label{eq:BiGradedCoh}
\Hbg{n}{k}{V}{\L} \iso \begin{cases}H^k(M;\L) &k >n \\
\H{k}{V}{\L} &k=n\\
H^{k-1}(M;V/\L) & k < n.
\end{cases}
\end{equation}

\begin{defn}[\cite{HS05}]The category of differential $k$-cocycles $\cH{k}{V}{\L}$ is the fundamental groupoid
\[ \cH{k}{V}{\L} \= \pi_{\leq 1} \left(  \Cbg{k}{k-2}{V}{\L} \overset{d}\to \Cbg{k}{k-1}{V}{\L} \overset{d}\to \Zbg{k}{k}{V}{\L}   \right). \]
In terms of objects and morphisms, this means
\begin{align*} \Ob \cH{k}{V}{\L} &=  \Zbg{k}{k}{V}{\L} , \\
\Hom (\x_1, \x_2) &= \{ \check y \in \Cbg{k}{k-1}{V}{\L} \st d \check y = \x_2 - \x_1 \} / d \Cbg{k}{k-2}{V}{\L} \\
& \quad \quad \subset \Cbg{k}{k-1}{V}{\L} / d\Cbg{k}{k-2}{V}{\L}. \end{align*}
\end{defn}

The category $\cH{k}{V}{\L}$ has
\[ \pi_0 \left( \cH{k}{V}{\L} \right) \iso \H{k}{V}{\L},\]
and we denote an object $\x = (c,h,\omega)\in \Zbg{k}{k}{V}{\L}$ by
\[ \x  \in \cH{k}{V}{\L}.\]
From the triple $\x=(c,h,\omega)$, $c$ determines the characteristic class, $h$ determines the holonomy, and $\omega$ is the curvature.

To see this explicitly, a differential cocycle $\x$ is a triple
\[ \x = (c, h, \omega) \in C^k(M;\L) \times C^{k-1}(M;V) \times \Omega^k(M;V) \]
satisfying
\[ \delta c =0, \qquad \delta h = \omega - c, \qquad d\omega = 0.\]
In other words, $c$ and $\omega$ are closed, and the choice of $h$ specifies the equivalence $[c]=[\omega] \in H^k(M;V)$ at the cochain level.  One easily checks that the standard commutative square \eqref{eq:FundSquareV} is induced by the obvious maps below.
\[ \vcenter{ \xymatrix{ [(c,h,\omega)] \ar@{|->}[r] \ar@{|->}[d]& [c] \ar@{|->}[d] \\
\omega \ar@{|->}[r] &[\omega]=[c] } }
\quad \Rightarrow \quad 
\vcenter{ \xymatrix{ \Hbg{k}{k}{V}{\L} \ar[r] \ar[d]& H^k(M;\L) \ar[d] \\
\Omega^k(M;V)_\L \ar[r] &H^k(M;V)_\L
}} \]
Furthermore, the canonical isomorphism from $\Hbg{k}{k}{V}{\L}$ to the differential characters $\H{k}{V}{\L}$ is given by
\begin{align*}
\Hbg{k}{k}{V}{\L} &\overset{\iso}\longrightarrow \H{k}{V}{\L} \\
[(c,h,\omega)] &\longmapsto \la h, \bullet \ra \mod \L.
\end{align*}
In the future, we will not distinguish between $\Hbg{k}{k}{V}{\L}$ and $\H{k}{V}{\L}$.

Because the group $H^k(M;V)_\L$ appears frequently, it is worthwhile noting the following.

\begin{lemma}\label{lem:uct}There is a natural isomorphism \[H^k(M;V)_\L \iso \Hom(H_k(M),\L).\]
\begin{proof}
Because $\Hom(H_k(M), -)$ is a left-exact functor, the inclusion $\L \into V$ naturally induces an injective homomorphism
\[ \Hom(H_k(M), \L) \into \Hom(H_k(M), V).\]
That $V$ is torsion-free implies $\Ext(-,V)=0$.  Combining these facts with the universal coefficient theorem gives the following.
\[ \xymatrix@R=5mm{ 0 \ar[r]& \Ext(H_{k-1}(M), \L) \ar[d] \ar[r]& H^k(M;\L) \ar[r] \ar[d]& \Hom(H_k(M), \L) \ar[r] \ar@{^(->}[d]& 0 \\
0 \ar[r]& 0 \ar[r]& H^k(M;V) \ar[r]^<<<<<{\iso}& \Hom(H_k(M), V) \ar[r]& 0. 
}\]
Therefore, the image of $H^k(M;\L) \to H^k(M;V)\iso \Hom(H_k(M),V)$ is equal to the image of $\Hom(H_k(M),\L)$.
\end{proof} \end{lemma}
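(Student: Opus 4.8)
The plan is to deduce the isomorphism from the universal coefficient theorem, applied naturally in the coefficient group to the inclusion $\L \into V$. Recall that for any abelian coefficient group $G$ the universal coefficient theorem furnishes a natural short exact sequence
\[ 0 \to \Ext(H_{k-1}(M), G) \to H^k(M;G) \to \Hom(H_k(M), G) \to 0, \]
where naturality is with respect to homomorphisms of $G$. I would instantiate this for $G = \L$ and for $G = V$ and stack the two sequences into a commutative ladder whose vertical maps are induced by $\L \into V$.

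The key simplification occurs on the $V$-row. Because $V$ is a finite-dimensional real vector space, it is divisible, hence injective as a $\Z$-module, so $\Ext(H_{k-1}(M), V) = 0$ and the bottom row collapses to a natural isomorphism $H^k(M;V) \overset{\iso}\to \Hom(H_k(M), V)$. On the other hand, since $\Hom(H_k(M), -)$ is left exact, the inclusion $\L \into V$ induces an injection $\Hom(H_k(M), \L) \into \Hom(H_k(M), V)$, which lets me regard $\Hom(H_k(M), \L)$ as a distinguished subgroup of the right-hand vertical target.

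Finally, I would run a short diagram chase on the ladder to match images. By definition $H^k(M;V)_\L$ is the image of $H^k(M;\L) \to H^k(M;V)$; under the isomorphism $H^k(M;V) \iso \Hom(H_k(M), V)$ this image is carried to the image of the composite $H^k(M;\L) \to \Hom(H_k(M),\L) \into \Hom(H_k(M), V)$, by commutativity of the ladder. Since the first arrow is surjective (it is the UCT quotient map of the $\L$-row) and the second is the injection above, the image of the composite is precisely the copy of $\Hom(H_k(M),\L)$ sitting inside $\Hom(H_k(M),V)$. Hence $H^k(M;V)_\L \iso \Hom(H_k(M),\L)$, and naturality in $M$ follows from naturality of the universal coefficient sequence.

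I expect the only genuine subtlety to be bookkeeping in this chase: one must confirm that the UCT surjection $H^k(M;\L) \to \Hom(H_k(M),\L)$ commutes with passage to $V$-coefficients, so that the two images line up correctly under the identifications. The vanishing $\Ext(H_{k-1}(M),V) = 0$ is what makes the $V$-row degenerate and thereby collapses the whole comparison into a clean statement about images.
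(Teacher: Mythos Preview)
Your proposal is correct and follows essentially the same route as the paper: both set up the UCT ladder for $\L \into V$, use the vanishing of $\Ext(H_{k-1}(M),V)$ to collapse the $V$-row to an isomorphism, and then read off the image of $H^k(M;\L)\to H^k(M;V)$ as $\Hom(H_k(M),\L)$ via surjectivity of the $\L$-row quotient and injectivity of $\Hom(H_k(M),\L)\into\Hom(H_k(M),V)$. Your justification for the Ext vanishing (divisibility of $V$, hence injectivity as a $\Z$-module) is in fact sharper than the paper's appeal to torsion-freeness, which by itself would not suffice.
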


\renewcommand{\H}[3]{\check{H}^{#1}(#3)}
\subsection{Change of coefficients}
For spacing purposes, we use the notation $\H{k}{V}{\L}$ in this subsection to refer to the differential cohomology functor.  In other words, $\H{k}{V}{\L}$ denotes the groups $\check{H}^k(M;\L)$ that are functorially associated to $M$.

Let $0 \to \L_1 \overset{i}\into \L_2 \to V$ be an inclusion of totally disconnected subgroups of $V$.  This induces homomorphisms
\begin{equation}\label{eq:ChangeOfCoeff} \H{k}{V}{\L_1} \overset{\check{i}}\longrightarrow \H{k}{V}{\L_2}. \end{equation}
The short exact sequences
\[ 0 \to \L_1 \overset{i}\to \L_2 \to \L_2/ \L_1 \to 0  \quad \text{ and } \quad 0 \to \L_2 / \L_1 \to V/\L_1 \to V/\L_2 \to 0\]
also give rise to the long exact sequences
\begin{align*}
 \cdots \to H^k(\L_1) \to H^k(\L_2) \to &H^k(\L_2/\L_1) \overset{\beta}\to H^{k+1}(\L_1) \to \cdots, \\
 \cdots \to H^k(\L_2/\L_2) \to H^k(V/\L_1) \overset{i_*}\to &H^k(V/\L_2) \overset{\beta}\to H^{k+1}(\L_2/\L_1) \to \cdots ,\end{align*}
where $\beta$ is a Bockstein homomorphism.

\begin{prop}\label{prop:latticechange}The inclusion $\L_1 \overset{i}\into \L_2 \to V$ induces exact sequences
\begin{align*} \cdots \overset{i_*}\to H^{k-2}(V/\L_2) \overset{\beta}\to H^{k-1}(\L_2 / \L_1) \to &\H{k}{V}{\L_1} \overset{\check{i}}\to \H{k}{V}{\L_2} \to H^k(\L_2/\L_1) \overset{\beta} \to H^{k+1}(\L_1) \to \cdots, \\
0 \to \Ker i_* \to &\H{k}{V}{\L_1} \overset{\check{i}}\to \H{k}{V}{\L_2} \to \frac{H^k(\L_2)} {H^k( \L_1)} \to 0, 
\end{align*}
where $i_*: H^{k-1}(V/\L_1) \to H^{k-1}(V/\L_2)$.
\end{prop}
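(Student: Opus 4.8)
The plan is to present $\check i$ as a morphism between the fundamental short exact sequences \eqref{eq:exact1} and \eqref{eq:exact2} of Proposition~\ref{prop:DiffCharProps}, and then to read off its kernel and cokernel with the snake lemma. The first thing to check is that \eqref{eq:exact1} and \eqref{eq:exact2} are natural in the coefficient pair $(V,\L)$. In the Hopkins--Singer model $\check i$ acts by $(c,h,\omega)\mapsto(ic,h,\omega)$: it fixes the curvature $\omega$, leaves the holonomy cochain $h$ unchanged (now reduced modulo the larger group $\L_2$), and pushes the characteristic cocycle $c$ forward along $\L_1\into\L_2$. From this I would verify that $\check i$ induces, on the outer terms of \eqref{eq:exact1}, the map $i_*\colon H^{k-1}(V/\L_1)\to H^{k-1}(V/\L_2)$ and the inclusion $\Omega^k(M;V)_{\L_1}\into\Omega^k(M;V)_{\L_2}$; and on the outer terms of \eqref{eq:exact2}, the projection $\Omega^{k-1}(M;V)/\Omega^{k-1}(M;V)_{\L_1}\twoheadrightarrow\Omega^{k-1}(M;V)/\Omega^{k-1}(M;V)_{\L_2}$ and the map $i_*\colon H^k(\L_1)\to H^k(\L_2)$.

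For the kernel I would apply the snake lemma to \eqref{eq:exact1}. Enlarging the period group only weakens the period condition, so the induced map on curvature terms $\Omega^k(M;V)_{\L_1}\into\Omega^k(M;V)_{\L_2}$ is injective. The snake sequence therefore collapses to an isomorphism $\Ker\check i\overset{\iso}\to\Ker\bigl(i_*\colon H^{k-1}(V/\L_1)\to H^{k-1}(V/\L_2)\bigr)$, realized by restricting the flat inclusion $H^{k-1}(V/\L_1)\into\H{k}{V}{\L_1}$. This supplies the left-hand term $\Ker i_*$ in both asserted sequences.

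For the cokernel and image I would apply the snake lemma to \eqref{eq:exact2}. Here the induced map on the form terms is surjective, so the snake lemma gives $\Image\check i=\Ker\bigl(\H{k}{V}{\L_2}\overset{\mathrm{char}}\to H^k(\L_2)\overset{r}\to H^k(\L_2/\L_1)\bigr)$, with $r$ induced by $\L_2\to\L_2/\L_1$, together with an isomorphism $\operatorname{Coker}\check i\overset{\iso}\to\operatorname{Coker}\bigl(i_*\colon H^k(\L_1)\to H^k(\L_2)\bigr)=H^k(\L_2)/H^k(\L_1)$ induced by the characteristic-class map (using that $\Image i_*=\Ker r$ in the coefficient long exact sequence). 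Combined with the kernel computation, this is exactly the second, four-term exact sequence of the proposition.

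To obtain the first sequence I would splice $\check i$ into the two Bockstein long exact sequences. On the left, the sequence of $0\to\L_2/\L_1\to V/\L_1\to V/\L_2\to0$ identifies $\Ker i_*$ with the image of $q\colon H^{k-1}(\L_2/\L_1)\to H^{k-1}(V/\L_1)$, so composing $q$ with the flat inclusion defines the map $H^{k-1}(\L_2/\L_1)\to\H{k}{V}{\L_1}$; exactness at $H^{k-1}(\L_2/\L_1)$ then follows from $\Ker q=\Image\beta$, and exactness at $\H{k}{V}{\L_1}$ from the kernel computation. On the right, surjectivity of $\mathrm{char}$ gives $\Image(r\circ\mathrm{char})=\Image r=\Ker\bigl(\beta\colon H^k(\L_2/\L_1)\to H^{k+1}(\L_1)\bigr)$ via the sequence of $0\to\L_1\to\L_2\to\L_2/\L_1\to0$, which is exactness at $H^k(\L_2/\L_1)$, while exactness at $\H{k}{V}{\L_2}$ is the image computation above; the two tails continue as the respective coefficient long exact sequences. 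I expect the only genuine work to be bookkeeping: verifying the naturality claimed in the first paragraph carefully enough to pin down the induced vertical maps, and keeping the three maps written $i_*$ and the two Bocksteins $\beta$ straight so that the spliced connecting homomorphisms are correctly identified. The two snake-lemma steps themselves are routine once the diagrams are in place.
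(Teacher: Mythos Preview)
Your proposal is correct and is essentially the same argument as the paper's, just packaged slightly differently. The paper formalizes the ``snake lemma plus splicing'' step you describe into a stand-alone diagram-chase lemma (Lemma~\ref{lem:exactgrid}), which it then applies to the same two diagrams built from \eqref{eq:exact1} and \eqref{eq:exact2} together with the Bockstein long exact sequences; your direct use of the snake lemma followed by explicit splicing accomplishes the identical computation without the intermediate lemma.
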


The proof of Proposition \ref{prop:latticechange} will follow in a moment.  It is based on the homological algebra Lemma \ref{lem:exactgrid}, whose proof is a diagram chase left to the reader.  Before giving these details, we wish to highlight the case where $\L_1 = 0$ and $\L_2 = \L$.  From the long exact sequence
\[\cdots \to H^{k-1}(\L) \to H^{k-1}(V) \overset{i_*}\to H^{k-1}(V/\L) \to \cdots, \]
we obtain $\Ker i_* = H^{k-1}(V)_\L$.  Plugging into Proposition \ref{prop:latticechange} gives the important exact sequence
\begin{equation}\label{eq:ses0Z} 0\to  H^{k-1}(V)_\L \to  \H{k}{V}{0} \to \H{k}{V}{\L} \to H^k(\L) \to 0.
\end{equation}

\begin{rem}In \cite{CS85}, it is claimed that
\[ 0 \to \Ker i_* \to \H{k}{\R}{\L_1} \to \H{k}{\R}{\L_2} \to \Omega_{\L_2}^k/ \Omega_{\L_1}^k \to 0 \]
is exact, but Proposition \ref{prop:latticechange} shows this is not correct.  While there is a surjective homomorphism $H^k(\L_2)/H^k(\L_1) \to \Omega_{\L_2}^k/ \Omega_{\L_1}^k$, it can fail to be injective.  When $\L_1=0$ and $\L_2=\Z$, the relevant map $H^k(\Z) \to \Omega_\Z^k / \Omega^k_0 \iso H^k(\R)_\Z$ is an isomorphism if and only if $H^k(\Z)$ is torsion-free.
\end{rem}

\begin{lemma}\label{lem:exactgrid}Assume the following vertical and horizontal sequences are exact.
\[ \xymatrix@=6mm{ 
&\vdots \ar[d] \\
&A_0 \ar[d]^{f}&&0\ar[d] \\
0 \ar[r] &A_1\ar[d] \ar[r]^{g}&B_1\ar[d] \ar[r]&C_1 \ar[d]\ar[r]& 0 \\
0 \ar[r]&A_2\ar[r] &B_2 \ar[r] & C_2\ar[r] &0 \\
}  \hskip10mm \xymatrix@=6mm{
0 \ar[r] &A'_1\ar[d] \ar[r]&B'_1\ar[d] \ar[r]&C'_1 \ar[d]\ar[r]& 0 \\
0 \ar[r]&A'_2\ar[r] \ar[d]&B'_2 \ar[r]^{g'} & C'_2\ar[d]^{f'} \ar[r] &0 \\
&0 &&C'_3\ar[d] \\
&&&\vdots
} \]
Then, the sequences
\[ \cdots \to A_{-1} \to A_0 \overset{g\o f}\longrightarrow B_1 \to B_2 \text{ and } B'_1 \to B'_2 \overset{f' \o g'}\longrightarrow C_3' \to C_4' \to \cdots \]
are exact.
\begin{proof}The proof is a standard diagram chase and is left to the reader.
\end{proof}
\end{lemma}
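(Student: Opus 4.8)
The plan is to prove exactness of the first sequence directly and then obtain the second for free. Working in an abelian category (here abelian groups), reversing every arrow and relabeling $A\leftrightarrow C$ carries the left-hand diagram precisely onto the right-hand one: short exact rows go to short exact rows, the boundary condition ``$C$-column begins $0\to C_1$'' becomes ``$A'$-column ends $A'_1\to A'_2\to 0$'', the junction composite $g\circ f$ becomes $f'\circ g'$, and the snake lemma used below is self-dual. Thus the second statement is the formal dual of the first, and I only spell out
\[ \cdots \to A_{-1} \to A_0 \overset{g\circ f}{\to} B_1 \to B_2. \]
This sequence coincides with the vertical $A$-column $\cdots\to A_{-1}\to A_0\overset{f}{\to}A_1$ except that the arrow out of $A_0$ has been postcomposed with the monomorphism $g$ (horizontal exactness gives $0\to A_1\overset{g}{\to}B_1$) and $A_1$ has been replaced by $B_1$ with its vertical map to $B_2$. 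So exactness at every node strictly before $A_0$ is inherited verbatim from the assumed exactness of the $A$-column, and only two nodes need work. At $A_0$, injectivity of $g$ gives $\ker(g\circ f)=\ker f=\operatorname{im}(A_{-1}\to A_0)$, the last equality being vertical exactness at $A_0$.

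The crux is exactness at $B_1$, i.e.\ $\operatorname{im}(g\circ f)=\ker(B_1\to B_2)$. Here I would package the two horizontal short exact sequences as a single morphism of short exact sequences with components the vertical maps $A_1\to A_2$, $B_1\to B_2$, $C_1\to C_2$, and apply the snake lemma. Because the $C$-column begins $0\to C_1\to C_2$, the map $C_1\to C_2$ is injective, so its kernel vanishes; the snake sequence then forces the map $\ker(A_1\to A_2)\to\ker(B_1\to B_2)$ induced by $g$ to be surjective, while it is injective since $g$ is. Hence $g$ restricts to an isomorphism $\ker(A_1\to A_2)\overset{\sim}{\to}\ker(B_1\to B_2)$, whence $\ker(B_1\to B_2)=g\bigl(\ker(A_1\to A_2)\bigr)=g(\operatorname{im} f)=\operatorname{im}(g\circ f)$, using vertical exactness at $A_1$ for the middle equality. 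Equivalently, a direct element chase gives the same: a class in $\ker(B_1\to B_2)$ maps into $\ker(C_1\to C_2)=0$ in $C_1$, hence lifts along $g$ to some $a\in A_1$, and injectivity of $A_2\to B_2$ forces $a\in\ker(A_1\to A_2)$.

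The second sequence is handled by the dual of this argument, with kernels replaced by cokernels: the $A'$-column ends $A'_1\to A'_2\to 0$, so $A'_1\to A'_2$ is epic, making $\operatorname{coker}(A'_1\to A'_2)=0$; the snake lemma then identifies $\operatorname{coker}(B'_1\to B'_2)$ with a subobject of $\operatorname{coker}(C'_1\to C'_2)$ via $g'$, and vertical exactness of the $C'$-column at $C'_2$ and $C'_3$ yields exactness at $B'_2$ and $C'_3$. The only place the proof can go wrong is the bookkeeping of which boundary zeros are genuinely used: precisely $\ker(C_1\to C_2)=0$ in the first diagram and $\operatorname{coker}(A'_1\to A'_2)=0$ in the second. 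Dropping either makes the corresponding middle-node exactness fail, so I would be careful to invoke exactly those hypotheses and nothing stronger.
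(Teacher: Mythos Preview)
Your proof is correct; the paper itself leaves this lemma entirely to the reader, so there is nothing to compare against beyond noting that your snake-lemma argument (or the equivalent element chase you also sketch) is exactly the kind of standard verification the author had in mind. Your duality observation cleanly reduces the work to one case, and you correctly isolate the single nontrivial node in each sequence and the precise hypothesis ($\ker(C_1\to C_2)=0$, respectively $\operatorname{coker}(A'_1\to A'_2)=0$) needed there.
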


\begin{proof}[Proof of Proposition \ref{prop:latticechange}]
The proof is a direct application of Lemma \ref{lem:exactgrid}.  The short exact sequence \eqref{eq:exact1} produces the following exact sequences.
\[ \xymatrix@R=4mm{ 
&\vdots \ar[d]^{\beta} \\
&H^{k-1}(\L_2/ \L_1) \ar[d]^{j_*}&&0\ar[d] \\
0 \ar[r] & H^{k-1}(V / \L_1) \ar[d]^{i_*} \ar[r]& \H{k}{V}{\L_1} \ar[d] \ar[r]& \Omega^k_{\L_1} \ar[d]\ar[r]& 0 \\
0 \ar[r]& H^{k-1}(V/ \L_2) \ar[r] \ar[d]^{\beta}& \H{k}{V}{\L_2} \ar[r] & \Omega^k_{\L_2} \ar[d]\ar[r] &0 \\
& H^{k}(\L_2/ \L_1) \ar[d] &&  \Omega^k_{\L_2}/ \Omega^k_{\L_1}\ar[d] \\
&\vdots &&0
} \]
The first case of Lemma \ref{lem:exactgrid} then gives the exact sequence
\[ H^{k-2}(V/\L_1) \overset{i_*}\to  H^{k-2}(V/\L_2) \overset{\beta}\to H^{k-1}(\L_2 / \L_1) \to \H{k}{V}{\L_1} \to \H{k}{V}{\L_2}.  \]
Using the isomorphisms
\[ \frac{H^{k-1}(\L_2/\L_1)}{ \beta (H^{k-2}(V/\L_2) )}\iso \frac{H^{k-1}(\L_2/\L_1)}{ \Ker j_* }\iso  \Image j_* \iso \Ker i_*,\]
where $i_*: H^{k-1}(V/\L_1) \to H^{k-1}(V/\L_2)$, we arrive at the exact sequence
\[ 0 \to \Ker i_* \to \H{k}{V}{\L_1} \to \H{k}{V}{\L_2}.\]
The short exact sequence \eqref{eq:exact2} produces the following exact sequences.
\[\xymatrix@R=4mm{
&0\ar[d] && \vdots \ar[d] \\
&\Omega^{k-1}_{\L_2} / { \Omega^{k-1}_{\L_1} } \ar[d]&& H^{k-1} (\L_2/\L_1)\ar[d]^{\beta} \\
0 \ar[r] & \Omega^{k-1} / \Omega^{k-1}_{\L_1}  \ar[d] \ar[r]& \H{k}{V}{\L_1} \ar[d] \ar[r]& H^k(\L_1) \ar[d]\ar[r]& 0 \\
0 \ar[r]&  \Omega^{k-1} / \Omega^{k-1}_{\L_2} \ar[r] \ar[d]& \H{k}{V}{\L_2} \ar[r] & H^k(\L_2) \ar[d]\ar[r] &0 \\
&0 && H^k(\L_2/\L_1)\ar[d]^{\beta} \\
&&&\vdots
}\]
The second case of Lemma \ref{lem:exactgrid} then gives the exact sequence
\[ \H{k}{V}{\L_1} \to \H{k}{V}{\L_2} \to H^k(\L_2 / \L_1) \overset{\beta}\to H^{k+1}(\L_1) \to \cdots. \]
Using the injective map $\frac{H^k(\L_2)} {H^k( \L_1)} \to H^k(\L_2/\L_1)$, we can rewrite this
\[ \H{k}{V}{\L_1} \to \H{k}{V}{\L_2} \to \frac{H^k(\L_2)} {H^k( \L_1)} \to 0.\qedhere\]
\end{proof}


\section{Trivializations in differential ordinary cohomology}\label{sec:TrivOrdinary}
\renewcommand{\H}[4][M]{\check{H}^{#2}(#1; #4)}

We now discuss trivializations of cocycles in differential ordinary cohomology.  Again, this idea is not new, and our treatment is heavily influenced by the papers \cite{HS05} and \cite{Fre00}.  The goal is simply to generalize the fundamental commutative square \eqref{eq:FundSquareV} to trivializations.

\subsection{Two settings for trivializations}
In general, one can define trivializations whenever there is some notion of objects, morphisms, and a specified identity object.  A trivialization is then a choice of isomorphism between an object and the identity object.  In this paper, though, we will have one of the following two situations in mind.

\begin{example}\label{ex:trivcat}Let $\{C^*, \delta\}$ be a cochain complex.  Associated to any cocycle $x \in Z^k$ is the category of trivializations of $x$, denoted $\mathcal{T}(x)$.  Succinctly, $\mathcal{T}(x)$ is the fundamental groupoid
\[ \mathcal{T}(x) \= \pi_{\leq 1} \left(  C^{k-3} \overset{\delta}\to C^{k-2} \overset{\delta}\to \delta^{-1}(x) \right),\] 
where $\delta^{-1}(x) \subset C^{k-1}$ is the pre-image of $x$ under $\delta$.  In other words, the objects are cochains $y \in C^{k-1}$ such that $\delta y = x$, and the morphisms are given by the additive action of $C^{k-2}/\delta C^{k-3}$.  Such a trivialization exists precisely when $x\in \delta C^{k-1}$, which is equivalent to saying $[x] = 0 \in H^n(C^*)$.  Assuming $[x]=0$, the set of objects is a torsor over $Z^{k-1}$.  Denoting the set of isomorphism classes by
\[ T(x) \= \pi_0 \left( \mathcal{T}(x) \right),\]
one easily sees that $T(x)$ is a torsor for $Z^{k-1}/\delta C^{k-2} = H^{k-1}(C^*)$.
\end{example}

\begin{example}\label{ex:trivset}Let $G_1 \overset{f}\to G_2$ be a group homomorphism.  Then, for $g_2 \in G_2$, we define the set of trivializations of $g_2$ as the inverse image of $g_2$ under $f$; i.e.
\[ T(g_2) \= f^{-1}(g_2) = \{ g \in G_1 \st f(g_1) = g_2\} \subset G_1.\]
If $g_2 \in \Image f$, then $T(g_2)$ is a torsor for $\Ker f$.
\end{example}

\subsection{Trivializations of differential cocycles}As in Section \ref{sec:Background}, assume $M$ is a smooth manifold, $\L \subset V$ is a totally disconnected subgroup of a vector space, and $\Cbg{*}{*}{V}{\L}$ is the bi-graded complex of differential cochains.

\begin{defn}\label{defn:trivgeo}
Let $\x = (c,h,\omega) \in \cH{k}{V}{\L}$.  The category of trivializations of the differential cocycle $\x$ is 
\[ \cT{\x} \= \pi_{\leq 1} \left(  \Cbg{k-1}{k-3}{V}{\L} \overset{d}\to \Cbg{k-1}{k-2}{V}{\L} \overset{d}\to d^{-1}(\x) \right),\]
where $d^{-1}(\x) \subset \Cbg{k-1}{k-1}{V}{\L}$.  The set of equivalence classes is denoted
\[ \T{\x} \= \pi_0\left( \cT{\x} \right).\]
\end{defn}

Let us briefly clarify the above definition.  There is a natural equality 
\[\Zbg{k}{k}{V}{\L} = \Zbg{k-1}{k}{V}{\L}.\]  If $\x \in \Zbg{k}{k}{V}{\L}$, we can consider the category of trivializations using either $\{ \Cbg{k}{*}{V}{\L} \}$ or $\{ \Cbg{k-1}{*}{V}{\L}\}$.  We choose the latter option.  To better visualize this, we draw the bi-graded differential cochain complex. The region above the dotted line contains no differential forms.
\[ \xymatrix@!0@R=6mm@C=22mm{ &\Cbg{k}{k-2}{V}{\L} \ar[rr]^{d} \ar[dd]^{=} && \Cbg{k}{k-1}{V}{\L} \ar[rr]^{d} \ar@{^(->}[dd]&&\Cbg{k}{k}{V}{\L} \ar[dd]^{=}\\ &&{\color{darkgray}\omega = 0} \\
&\Cbg{k-1}{k-2}{V}{\L} \ar[rr]^{d} \ar@{^(->}[dd] &&\Cbg{k-1}{k-1}{V}{\L} \ar[rr]^{d} \ar[dd]^{=}&&\Cbg{k-1}{k}{V}{\L} \ar[dd]^{=} \\ &&&&{\color{darkgray} \omega \in \Omega^*(M;V)} \\
\ar@{--}[uuuurrrr] &\Cbg{k-2}{k-2}{V}{\L} \ar[rr]^{d}&&\Cbg{k-2}{k-1}{V}{\L} \ar[rr]^{d} &&\Cbg{k-2}{k}{V}{\L} }
\]
A differential cocycle $\x \in \Zbg{k}{k}{V}{\L}$ naturally lives in the top row, but we define the trivializations using the second row.

Therefore, a trivialization of $\x$ exists if and only if $[\x] = 0 \in \Hbg{k-1}{k}{V}{\L} \iso H^k(M;\L)$.  In particular, a trivialization of $\x$ may exist when the form $\omega \neq 0$.  This is weaker than the condition $[\x] = 0 \in \H{k}{V}{\L}$, which is the obstruction to trivializing $\x$ in the first row $\Cbg{k}{*}{V}{\L}$.  

\begin{rem}The category of trivializations defined using the complex $\{\Cbg{k}{*}{V}{\L}\}$ is of relevance as well.  Often these are referred to as geometric trivializations.  In degree 2, this is the category which describes connection preserving isomorphisms of $\R/\Z$-bundles.  To avoid confusion, we will not call objects in $\cT{\x}$ geometric trivializations.  The subscript geo simply refers to the fact that the trivialization contains local geometric data and is non-canonically equivalent to differential character.  Similarly, the subscripts top, curv, and dR should convey that these structures generalize the characteristic class, curvature, and de Rham class of a differential character.
\end{rem}

\begin{defn}\label{defn:trivtop}Let $\x = (c,h,\omega) \in \cH{k}{V}{\L}$.  The trivializations of the characteristic cocycle $c$ form a category
\[ \cTt{\x} \= \mathcal{T}(c) = \pi_{\leq 1} \left( C^{k-3}(M;\L) \overset{\delta}\to C^{k-2}(M;\L) \overset{\delta}\to \delta^{-1}(c)  \right).\]
The equivalence classes are denoted
\[ \Tt{\x} \= \pi_0\left( \cTt{\x} \right).\]
\end{defn}

\begin{defn}\label{defn:trivcurv}Let $[\x] \in \H{k}{V}{\L}$.  The set $\Tc{\x} = \Tc{[\x]}$ is the pre-image of $[\x]$ in the homomorphism
\[ \Omega^{k-1}(M;V) \to \H{k}{V}{\L} \]
defined by \eqref{eq:exact2}.  In other words,
\[ \Tc{\x} \= \left\{ \eta \in \Omega^{k-1}(M;V) \st \int_\bullet \eta  = \la [\x], \bullet \ra \in \Hom(Z_{k-1}(M), V/\L) \right\},\]
and elements can be thought of as globally defined connection forms on $M$.
\end{defn}

\begin{defn}\label{defn:trivhol}The set $\Th{\x} = \Th{[\x]}$ is the pre-image of $[\x]$ in the homomorphism
\[ \H{k}{V}{0} \to \H{k}{V}{\L}\]
defined by the inclusion $0 \into \L$; i.e. elements are lifts of the character from $V/\L$ to $V$:
\[ \Th{\x} \= \left\{ \wt{\chi} \in \Hom(Z_{k-1}(M), V) \st \la \wt \chi, \bullet \ra = \la [\x], \bullet \ra \mod \L \right\}. \]
\end{defn}

\begin{rem}Plugging in $\L=0$ to the short exact sequence \eqref{eq:exact2} gives a canonical isomorphism
\begin{equation}\label{eq:IsoTopH0} \frac{\Omega^{k-1}(M;V)}{\Omega^{k-1}(M;V)_0} = \frac{\Omega^{k-1}(M;V)}{d\Omega^{k-2}(M;V)} \iso \H{k}{V}{0}.\end{equation}
\end{rem}

\begin{rem}We also consider the sets $\Tc{\x}$ and $\Th{\x}$ as categories with no morphims other than the identity.
\end{rem}

\begin{prop}\label{prop:Obstruction}For $\x =(c,h,\omega) \in \cH{k}{V}{\L}$, the sets $\T{\x}, \Tt{\x}, \Tc{\x},$ and $\Th{\x}$ are all non-empty if and only if the characteristic class $[c] = 0 \in H^k(M;\L)$.  If $[c]=0$, then these sets are torsors for $\H{k-1}{V}{\L}$, $H^{k-1}(M;\L)$, $\Omega^{k-1}(M;V)_\L$, and $H^{k-1}(M;V)_\L$ respectively.
\begin{proof}
The categories $\cT{\x}$ and $\cTt{\x}$ are defined using the general construction in Example \ref{ex:trivcat}.  The relevant cochain complexes are
\begin{align*}
 \cdots \overset{d}\to \Cbg{k-1}{k-2}{V}{\L} \overset{d}\to \Cbg{k-1}{k-1}{V}{\L} \overset{d}\to \Zbg{k-1}{k}{V}{\L}, \\ 
\cdots \overset{\delta}\to C^{k-2}(M;\L) \overset{\delta}\to C^{k-2}(M;\L) \overset{\delta}\to C^{k-1}(M;\L) \overset{\delta}\to Z^k(M;\L).
\end{align*}
Therefore, $\T{\x}$ is non-empty if and only if 
\[ [\x] = 0 \in \Hbg{k-1}{k}{V}{\L} \iso H^k(M;\L),\]
where we use the isomorphism \eqref{eq:BiGradedCoh}.  If $[c]=0$, then $\T{\x}$ is a torsor for 
\[ \Zbg{k-1}{k-1}{V}{\L}/d \Cbg{k-1}{k-2}{V}{\L} \iso \H{k-1}{V}{\L}.\]  The second cochain complex immediately implies $\Tt{\x}$ is non-empty if and only if $[c]=0\in H^k(M;\L)$.  If $[c]=0$, then $\Tt{\x}$ is a torsor for $H^{k-1}(M;\L)$.

The sets $\Tc{\x}$ and $\Th{\x}$ are defined using the general construction in Example \ref{ex:trivset}, and we rewrite the relevant exact sequences \eqref{eq:exact2} and \eqref{eq:ses0Z}:
\[ \xymatrix@C=5mm@R=4mm{0 \ar[r]& \Omega^{k-1}(M;V)_\L  \ar[r] \ar[dd]^{\big/ d\Omega^{k-2}} & \Omega^{k-1}(M;V) \ar[dr] \ar[dd]^{\big/ d\Omega^{k-2}}\\
 &&& \H{k}{V}{\L} \ar[r]& H^k(M;\L) \ar[r]& 0 \\
0 \ar[r]& H^{k-1}(M;V)_\L \ar[r]& \H{k}{V}{0} \ar[ur]
}\]
Therefore, $\Tc{\x}$ and $\Th{\x}$ are both non-empty precisely when the characteristic class $[c]=0 \in H^k(M;\L)$.  Assuming $[c]=0$, then $\Tc{\x}$ is a torsor for $\Omega^{k-1}(M;V)_\L$, and $\Th{\x}$ is a torsor for $H^{k-1}(M;V)_\L$.
\end{proof}\end{prop}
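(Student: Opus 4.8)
The plan is to prove the four cases by recognizing each of the four sets as an instance of one of the two abstract constructions in Example \ref{ex:trivcat} and Example \ref{ex:trivset}, and then reading off the non-emptiness criterion and the torsor group directly from the general statement. Nothing genuinely new has to be computed; the work is entirely in identifying the relevant cochain complex, or the relevant homomorphism, in each case.

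First I would handle $\T{\x}$ and $\Tt{\x}$, which are both built via the category-of-trivializations construction of Example \ref{ex:trivcat}. For $\T{\x}$ the relevant cochain complex is the row $\{\Cbg{k-1}{*}{V}{\L}, d\}$ of the bigraded complex, and for $\Tt{\x} = \mathcal{T}(c)$ it is the ordinary complex $\{C^*(M;\L), \delta\}$. In each case Example \ref{ex:trivcat} says that the set of isomorphism classes is non-empty precisely when the degree-$k$ cohomology class vanishes, and, when non-empty, is a torsor for the degree-$(k-1)$ cohomology group. The only real content is identifying these cohomology groups: for $\T{\x}$ I would invoke the bigraded identification \eqref{eq:BiGradedCoh} to get $\Hbg{k-1}{k}{V}{\L} \iso H^k(M;\L)$ and $\Hbg{k-1}{k-1}{V}{\L} \iso \H{k-1}{V}{\L}$, and check that under the canonical identification $[\x]=0$ in degree $k$ is the same condition as $[c]=0 \in H^k(M;\L)$. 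For $\Tt{\x}$ the identification is immediate, since the complex is just $C^*(M;\L)$.

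Next I would treat $\Tc{\x}$ and $\Th{\x}$, which are instances of the preimage construction of Example \ref{ex:trivset} applied, respectively, to the curvature homomorphism $\Omega^{k-1}(M;V) \to \H{k}{V}{\L}$ from \eqref{eq:exact2} and to the coefficient-change homomorphism $\H{k}{V}{0} \to \H{k}{V}{\L}$ from \eqref{eq:ses0Z}. By Example \ref{ex:trivset}, each set is non-empty exactly when $[\x]$ lies in the image of the corresponding map, and is then a torsor for its kernel. To compute these images and kernels uniformly, I would assemble \eqref{eq:exact2} and \eqref{eq:ses0Z} into a single commutative diagram centered on $\H{k}{V}{\L}$, in which both maps compose with the characteristic-class projection $\H{k}{V}{\L} \to H^k(M;\L)$. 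Exactness of each sequence then shows that the image of each map equals the kernel of this projection, so $[\x]$ lies in the image if and only if $[c]=0 \in H^k(M;\L)$; the injectivity built into each short exact sequence identifies the two kernels as $\Omega^{k-1}(M;V)_\L$ and $H^{k-1}(M;V)_\L$.

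The main obstacle is the bookkeeping in this last step: one must verify that the non-emptiness conditions for all four sets genuinely coincide, i.e. that the image of the curvature map and the image of the coefficient-change map both cut out exactly the $[c]=0$ locus inside $\H{k}{V}{\L}$. This is not a computation so much as a careful assembly of the two exact sequences into the correct commutative diagram, after which everything follows by exactness. Once that diagram is in place, reading off the torsor groups from Examples \ref{ex:trivcat} and \ref{ex:trivset} is routine.
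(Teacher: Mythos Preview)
Your proposal is correct and follows essentially the same approach as the paper's proof: both argue by recognizing $\T{\x}$, $\Tt{\x}$ as instances of Example~\ref{ex:trivcat} and $\Tc{\x}$, $\Th{\x}$ as instances of Example~\ref{ex:trivset}, invoking \eqref{eq:BiGradedCoh} for the bigraded identifications and assembling \eqref{eq:exact2} and \eqref{eq:ses0Z} into a single diagram centered on $\H{k}{V}{\L}$ to read off the common obstruction and the kernels.
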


\begin{prop}\label{prop:trivsquare}Let $\x = (c,h,\omega) \in \cH{k}{V}{\L}$ with $[c]=0\in H^k(M;\L)$.  Then, we have commutative diagrams of functors and sets
\[ \xymatrix{ \cT{\x}  \ar[d] \ar[r]& \cTt[\l]{\x} \ar[d]\\
\Tc[\l]{\x} \ar[r] & \Th[\l]{\x}
} \qquad \qquad \xymatrix{ \T{\x}  \ar[d] \ar[r]& \Tt[\l]{\x} \ar[d]\\
\Tc[\l]{\x} \ar[r] & \Th[\l]{\x}
} \]
induced by 
\[ \xymatrix{ (b,k,\eta) \ar@{|->}[r] \ar@{|->}[d]& b \ar@{|->}[d]\\
\eta \ar@{|->}[r] & \int_{\bullet}\eta  \>=\>  \la h+b, \bullet \ra.}\]
\begin{proof}
We first establish the maps at the level of objects.  Let $\s = (b,k,\eta) \in C^{k-1}(M;\L) \times C^{k-2}(M;V) \times \Omega^{k-1}(M)$ be a trivialization of $\x$.  Expanding the relation $d \s = \x$, we see
\begin{align}
\label{eq:toptriv} \delta b &= c, \\ 
\label{eq:liftscoincide} \eta - b - \delta k &= h, \\ 
\nonumber d\eta &= \omega.
\end{align}
Equation \eqref{eq:toptriv} implies $b \in  \cTt{c}$.  

Let $z\in Z_{k-1}(M)$. The second equation \eqref{eq:liftscoincide}, along with the facts that $b(z) \in \L$ and $\delta k(z) = k(\d z) =0$, implies
\begin{align*}\label{eq:liftscoincide} \int_z \eta &= \la h + b + \delta k, z\ra \\
 &= \la h, z\ra \mod \L.
\end{align*}
Hence $\eta$ is a global connection form for $\x$; i.e. $\eta \in \Tc{\x}$.

Given any $\eta \in \Tc{\x}$, we define the class in $\Th{\x} \subset \H{k}{V}{0}$ using the standard map $\Omega^{k-1}(M) \to \H{k}{V}{0}$ given by integration on closed cycles.  The maps
\[ \xymatrix{
\Omega^{k-1}(M)  \ar[r] \ar[drr] & \frac{\Omega^{k-1}(M;V)}{d\Omega^{k-2}(M;V)} \ar[r]^{\iso}& \H{k}{V}{0} \ar[d] \\
&&\H{k}{V}{\L}  }  \]
obviously commute, and $\eta \mapsto [\x] \in \H{k}{V}{\L}$.  Therefore, the image of $\eta$ in $\H{k}{V}{0}$ is an element in $\Th{\x}$; integrating $\eta$ lifts of the holonomy of $\x$ from $V/\L$ to $V$.

Given $b \in  \cTt{\x} \subset C^{k-1}(M;\L)$, define the character $\wt \chi \in \H{k}{V}{0}$ by
\[ \la \wt \chi, \bullet \ra  \= \la h + b, \bullet\ra. \]
Since $\la \chi, \bullet\ra = \la h, \bullet \ra$ mod $\L$, then $\wt \chi \in \Th{\x}$.  The commutativity of the square follows from equation \eqref{eq:liftscoincide}.  If $z\in Z_{k-1}(M)$, then
\[ \int_z  \eta = \la h + b + \delta k, z\ra = \la h+b, z\ra. \]

We now define and check the functor on morphisms.  Let $\s_1 = (b,k,\eta) \in \cT{\x}$, and let $(a,l) \in C^{k-2}(M;\L) \times C^{k-3}(M;V)$ determine a morphism in $\Hom_{\cT{\x}}(\s_1, \s_2)$ by $\s_2 = \s_1 + d(a,l).$  We then see that
\[ \s_2 = (b,k,\eta) + d(a,l) = (b + \delta a, k - a - \delta l,\eta).\]
The differential form component of $\s_2$ still equals $\eta$, so the map from $\cT{\x} \to \Tc{\x}$ is well defined.

The functor $\cT{\x} \to \cTt{\x}$ is induced by the obvious map of cochain complexes; the morphism $(a,l)$ is sent to $a \in \Hom_{\Tt{\x}}(b, b+\delta a)$.  Finally, the morphism $a$ in $\cTt{\x}$ does not change the resulting class in $\Th{\x}$ since
\[ \la h + b + \delta a, \bullet \ra = \la h + b, \bullet \ra \in \Hom\left( Z_{k-1}(M), V \right).\]
The functor $\cTt{\x} \to \Th{\x}$ is therefore well-defined.

This completes the proof for the commutative diagram of functors.  Taking isomorphism classes yields the commutative diagram of sets.
\end{proof}\end{prop}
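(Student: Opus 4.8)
The plan is to construct both squares by first defining the four maps on objects, verifying that each lands in the correct target, checking that the resulting square of object-maps commutes, and only then promoting the data to a square of functors by prescribing the behavior on morphisms. The diagram of sets will follow formally by applying $\pi_0$, since $\Tc{\x}$ and $\Th{\x}$ carry only identity morphisms.

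First I would fix an object $\s = (b,k,\eta) \in \cT{\x}$ and expand the defining relation $d\s = \x$ using the differential of Definition \ref{defn:diffcomplex}. Because $\s$ lies in $\Cbg{k-1}{k-1}{V}{\L}$, this produces the three relations $\delta b = c$, $\eta - b - \delta k = h$, and $d\eta = \omega$. The first is exactly the condition that $b$ be an object of $\cTt{\x} = \mathcal{T}(c)$, which defines the top horizontal map. For the left vertical map I would show $\eta \in \Tc{\x}$: pairing the middle relation against a cycle $z \in Z_{k-1}(M)$ and using $\la b, z\ra \in \L$ together with $\la \delta k, z\ra = \la k, \partial z\ra = 0$ gives $\int_z \eta \equiv \la h, z\ra \pmod{\L}$, which is precisely membership in $\Tc{\x}$.

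Next I would treat the two maps into $\Th{\x}$. For $\Tc{\x} \to \Th{\x}$ I use the canonical integration map $\Omega^{k-1}(M;V) \to \H{k}{V}{0}$ of \eqref{eq:IsoTopH0}, sending $\eta$ to the lift $z \mapsto \int_z \eta$; since $\eta$ already maps to $[\x]$ in $\H{k}{V}{\L}$, this lift lies in $\Th{\x}$ by commutativity of the relevant triangle. For $\cTt{\x} \to \Th{\x}$, given $b$ with $\delta b = c$, I set $\la \wt\chi, \bullet\ra = \la h + b, \bullet\ra$; a short computation using $\delta h = \omega - c$ and $\delta b = c$ yields $\la h + b, \partial w\ra = \int_w \omega$ for every $w \in C_k(M)$, so $\wt\chi$ is a genuine character in $\H{k}{V}{0}$ with curvature $\omega$, and reducing modulo $\L$ shows it represents $[\x]$, hence lies in $\Th{\x}$. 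Commutativity of the object-square is then the identity $\int_z \eta = \la h + b, z\ra$ for cycles $z$, which again follows from $\eta = h + b + \delta k$ and $\la \delta k, z\ra = 0$.

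Finally I would extend the construction to morphisms. A morphism $\s_1 \to \s_2$ in $\cT{\x}$ is represented by some $(a,l) \in \Cbg{k-1}{k-2}{V}{\L}$ with $\s_2 = \s_1 + d(a,l)$, and the appropriate (boundary) case of the differential gives $d(a,l) = (\delta a, -a - \delta l, 0)$, so $\s_2 = (b + \delta a,\, k - a - \delta l,\, \eta)$. The three facts that the form component is unchanged, that the $\L$-component changes by $\delta a$, and that $\la \delta a, z\ra = 0$ on cycles show respectively that $\cT{\x} \to \Tc{\x}$ sends the morphism to an identity, that $\cT{\x} \to \cTt{\x}$ sends it to $a \in \Hom_{\cTt{\x}}(b, b + \delta a)$, and that the induced element of $\Th{\x}$ is unchanged. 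Thus all four functors are well defined and the square of functors commutes; applying $\pi_0$ then yields the square of sets. I expect the only genuine obstacle to be bookkeeping rather than depth: one must track carefully which graded piece of the bi-complex each cochain occupies and apply the correct form of the differential, while the single recurring principle---that coboundaries and $\L$-valued cochains pair, respectively, to $0$ and into $\L$ against cycles---drives essentially every verification.
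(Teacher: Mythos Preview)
Your proposal is correct and follows essentially the same route as the paper: expand $d\s=\x$ into the three component relations, use them to define and verify the four maps on objects, check commutativity via $\int_z\eta=\la h+b,z\ra$, and then analyze $d(a,l)=(\delta a,-a-\delta l,0)$ to extend everything to morphisms. The only difference is that you include an explicit verification that $\la h+b,\bullet\ra$ is a genuine degree-$k$ character with curvature $\omega$, which the paper leaves implicit.
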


\begin{thm}\label{thm:torsorsquare}Let $\x = (c,h,\omega) \in \cH{k}{V}{\L}$ with $[c]=0 \in H^k(M;\L)$.  Then, the commutative square on the left is a torsor for the standard commutative square on the right.
\[ \xymatrix{ \T{\x}  \ar[d] \ar[r]& \Tt[\l]{\x} \ar[d]\\
\Tc[\l]{\x} \ar[r] & \Th[\l]{\x}
} \qquad \qquad
 \xymatrix{ \H{k-1}{V}{\L} \ar[d]\ar[r]&H^{k-1}(M;\L) \ar[d]\\
\Omega^{k-1}(M;V)_\L \ar[r] &H^{k-1}(M;V)_\L }
\]
\begin{proof}
Consider the commutative square of functors from Proposition \ref{prop:trivsquare} at the level of objects, which we draw below on the left.  By Proposition \ref{prop:Obstruction}, it is clear that each space on the left is a torsor for the corresponding group in the square on the right.
\[ \xymatrix{ \Ob \cT{\x}  \ar[d] \ar[r]& \Ob \cTt[\l]{\x} \ar[d]\\
\Tc[\l]{\x} \ar[r] & \Th[\l]{\x}
} \qquad 
 \xymatrix{ \Ob \cH{k-1}{V}{\L} \ar[d]\ar[r]& Z^{k-1}(M;\L) \ar[d]\\
\Omega^{k-1}(M;V)_\L \ar[r] &H^{k-1}(M;V)_\L }
\]
We check that the action is equivariant at the level objects by drawing the action of a cocycle $(b', k', \eta') \in   \cH{k-1}{V}{\L}$ on $(b,k,\eta) \in  \cT{\x}$ in the following diagram.
\[ \vcenter{\xymatrix@C=3mm{ (b+b', k+k', \eta+\eta') \ar@{|->}[d] \ar@{|->}[r] & b+b' \ar@{|->}[d] \\
\eta + \eta ' \ar@{|->}[r]& \int_{\bullet} \eta+ \eta'  \> = \> \big\la h+b+b', \bullet \big\ra}}
 \longmapsfrom 
\vcenter{\xymatrix@C=3mm{ (b', k', \eta') \ar@{|->}[d] \ar@{|->}[r]& b' \ar@{|->}[d] \\
\eta ' \ar@{|->}[r]& \int_\bullet \eta' = \la b', \bullet \ra }}  \]

The desired result follows by quotienting $\Ob \cT{\x}$ and $\Ob \cH{k-1}{V}{\L}$ by $d\Cbg{k-1}{k-2}{V}{\L}$, and  quotienting $\Ob \cTt{\x}$ and $Z^{k-1}(M;\L)$ by $dC^{k-2}(M;\L)$.\end{proof}\end{thm}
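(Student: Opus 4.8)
The plan is to reduce the statement to a single equivariance claim, verify that claim at the level of objects where the torsor actions are nothing more than componentwise addition of differential cochains, and then descend to isomorphism classes by quotienting. Saying that the set-square is a torsor for the group-square means precisely two things: each set is a torsor for the corresponding group, and the four maps intertwine the actions along the four group homomorphisms. The first half is already done: Proposition \ref{prop:Obstruction} shows that, since $[c]=0$, the sets $\T{\x}$, $\Tt{\x}$, $\Tc{\x}$, $\Th{\x}$ are nonempty torsors for $\H{k-1}{V}{\L}$, $H^{k-1}(M;\L)$, $\Omega^{k-1}(M;V)_\L$, and $H^{k-1}(M;V)_\L$ respectively, while Proposition \ref{prop:trivsquare} supplies the commutative square of maps. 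So the only remaining content is equivariance with respect to the homomorphisms of the standard square \eqref{eq:FundSquareV} in degree $k-1$.

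First I would lift everything to the level of objects, where the actions are transparent. Instead of $\T{\x}=\pi_0(\cT{\x})$ and $\Tt{\x}=\pi_0(\cTt{\x})$, I would work with $\Ob\cT{\x}=d^{-1}(\x)$ and $\Ob\cTt{\x}=\delta^{-1}(c)$; at this level the acting groups are the cocycle groups $\Ob\cH{k-1}{V}{\L}=\Zbg{k-1}{k-1}{V}{\L}$ and $Z^{k-1}(M;\L)$, acting by addition, while $\Tc{\x}$ and $\Th{\x}$ are unchanged because they are already defined as preimages under group homomorphisms. By Proposition \ref{prop:Obstruction}, each object-level set is a torsor for the corresponding cocycle group.

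Next I would verify equivariance by a direct computation. Given $\s=(b,k,\eta)\in\Ob\cT{\x}$ and a cocycle $\s'=(b',k',\eta')\in\Zbg{k-1}{k-1}{V}{\L}$, linearity of $d$ together with $d\s'=0$ gives $d(\s+\s')=\x$, so $\s+\s'=(b+b',k+k',\eta+\eta')$ is again a trivialization. Tracing this through the maps of Proposition \ref{prop:trivsquare}: the top map sends $b\mapsto b+b'$, where $b'$ is the characteristic cocycle of $\s'$; the left map sends $\eta\mapsto\eta+\eta'$, where $\eta'$ is the curvature of $\s'$; and the associated character $\la h+b,\bullet\ra$ is sent to $\la h+b+b',\bullet\ra=\la h+b,\bullet\ra+\la b',\bullet\ra$. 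Since $\eta'=b'+\delta k'$ and $\delta k'$ vanishes on cycles, the increment satisfies $\la b',\bullet\ra=\int_\bullet\eta'$, and it represents the image of $[\s']$ in $H^{k-1}(M;V)_\L$. These increments are exactly the images of $\s'$ under the four homomorphisms of the standard square, so all four maps are equivariant at the object level.

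Finally I would descend to isomorphism classes: quotient $\Ob\cT{\x}$ and $\Zbg{k-1}{k-1}{V}{\L}$ by $d\Cbg{k-1}{k-2}{V}{\L}$, and $\Ob\cTt{\x}$ and $Z^{k-1}(M;\L)$ by $\delta C^{k-2}(M;\L)$. Because the object-level action is by addition and commutes with these quotients, equivariance descends verbatim. The hard part will be the bookkeeping in this final step: one must check that the three distinct quotient operations---cocycles modulo coboundaries for the two upper corners, and the character-level description already built into $\Th{\x}$---are mutually compatible, so that the torsor structure of the whole square survives intact. This is essentially formal once the object-level computation is in hand, which is where the genuine content of the argument lies.
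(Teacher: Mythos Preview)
Your proposal is correct and follows essentially the same approach as the paper: lift to objects where the torsor actions are componentwise addition, verify equivariance of the four maps by tracing the action of a cocycle $(b',k',\eta')$ through the square, and then quotient by the appropriate coboundaries. The only difference is tone---you flag the final quotient as ``the hard part,'' whereas in the paper it is dispatched in one line, since the object-level equivariance is manifestly compatible with quotienting by $d\Cbg{k-1}{k-2}{V}{\L}$ and $\delta C^{k-2}(M;\L)$.
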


\begin{rem}Note that the categories $\cT{\x}$ and $\cH{k-1}{V}{\L}$ are non-canonically isomorphic.  The choice of an object in $\cT{\x}$ determines such an isomorphism.  In this sense, we can think of the category $\cT{\x}$ as being a torsor for the category $\cH{k-1}{V}{\L}$.  More precisely, the objects in $\cH{k-1}{V}{\L}$ form an abelian group which acts freely and transitively on the objects in $\cT{\x}$, and both sets of morphisms are canonically equal to $\Cbg{k-1}{k-2}{V}{\L}/d\Cbg{k-1}{k-3}{V}{\L}$.  The same statements hold for $\cTt{\x}$ and the category of singular cocycles $\mathcal{H}^{k-1}(M;\L)$.
\end{rem}

Theorem \ref{thm:torsorsquare} can be quite useful in practice.  First, all maps are surjective.  Second, the lack of injectivity is described explicitly by standard short exact sequences.  For example, when $H^{k-2}(M;V/\L)=0$, there is a natural isomorphism $\H{k-1}{V}{\L} \iso \Omega^{k-1}(M;V)_\L$.  In this case, the equivariance in Theorem \ref{thm:torsorsquare} implies that the natural map $\T{\x} \to \Tc{\x}$ is a bijection, so a trivialization of $\x$ is completely determined (up to isomorphism) by a differential form.

We now discuss the dependance of the categories $\cT{\x}$ and $\cTt{\x}$ on the choice of cocycle $\x$.  In examples, one may have a canonical differential cohomology class, but not a canonical cocycle.  For example, let $G$ be a compact Lie group with $\lambda \in H^{2k}(BG;\Z)$.  Associated to any principal $G$-bundle with connection $(P,\Theta) \overset{\pi}\to M$ is a canonical differential character $\l(P,\Theta) \in \H{2k}{\R}{\Z}$, but there is no canonical differential cocycle representing $\l(P,\Theta)$.  In the Hopkins--Singer model, for example, one usually assumes a classifying map for $P$ and a universal cocycle for $\lambda$.

\begin{prop}\label{prop:IsoCocycles}Let $\x$ and $\x + d \check y$ be isomorphic objects in $\cH{k}{V}{\L}$.  Then, the morphism $\check y = (b',k',0) \in \Cbg{k}{k-1}{V}{\L}$ determines isomorphisms $\cT{\x} \to \cT{\x + d\check y}$ and $\cTt{\x} \to \cTt{\x+d \check y}$ compatible with the maps in Proposition \ref{prop:trivsquare} and Theorem \ref{thm:torsorsquare}.
\begin{proof}The functors are defined in the obvious way
\[ \xymatrix@R=1mm{ &\cT{\x} \ar[r]  & \cT{\x + d \check y} &&\cTt{\x} \ar[r]&\cTt{\x + d\check y}\\
\Ob: &\s  \ar@{|->}[r]& \s + \check y &&b \ar@{|->}[r] &b + b' \\
\Mor: & \check t \ar@{|->}[r]& \check t  && a \ar@{|->}[r] & a
} \]
It is straightforward to check compatibility with the characteristic cocycle, curvature, and de Rham maps.
\end{proof}\end{prop}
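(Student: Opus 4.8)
The plan is to write down the two functors explicitly on objects and morphisms, verify they are mutually inverse (hence isomorphisms of categories), and then check that they intertwine the three structure maps of Proposition \ref{prop:trivsquare}. The crucial preliminary observation is purely computational: since $\check y = (b',k',0)$, expanding $d\check y$ gives $d\check y = (\delta b', -b' - \delta k', 0)$, so that $\x + d\check y = (c + \delta b',\, h - b' - \delta k',\, \omega)$. In particular the characteristic cocycle passes from $c$ to $c + \delta b'$, the curvature $\omega$ is unchanged, and the holonomy cochain passes from $h$ to $h - b' - \delta k'$.

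On objects I would set $\cT{\x} \to \cT{\x + d\check y}$ by $\s \mapsto \s + \check y$ and $\cTt{\x} \to \cTt{\x + d\check y}$ by $b \mapsto b + b'$, with both functors acting as the identity on morphisms. Well-definedness is immediate: if $d\s = \x$ then $d(\s + \check y) = \x + d\check y$, and if $\delta b = c$ then $\delta(b + b') = c + \delta b'$. Both functors are evidently invertible, with inverses given by subtracting $\check y$ and $b'$ respectively, so they are isomorphisms of categories; passing to $\pi_0$ yields the induced bijections $\T{\x} \to \T{\x + d\check y}$ and $\Tt{\x} \to \Tt{\x + d\check y}$.

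The content is in the compatibility check, though each piece is short. The key point is that $\Tc{\x}$ and $\Th{\x}$ depend only on the class $[\x]$, and since $\x$ and $\x + d\check y$ are isomorphic we have $[\x] = [\x + d\check y]$, so $\Tc{\x} = \Tc{\x + d\check y}$ and $\Th{\x} = \Th{\x + d\check y}$ as sets; ``compatibility'' therefore means that the curvature and de Rham maps out of $\cT{\x + d\check y}$ and $\cTt{\x + d\check y}$, precomposed with our functors, agree with those out of $\cT{\x}$ and $\cTt{\x}$. For the curvature map this is automatic, since the form component of $\check y$ is $0$ and hence $\s$ and $\s + \check y$ carry the same form $\eta \in \Tc{\x}$. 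For the de Rham map I would compute, for $b \in \cTt{\x}$ sent to $b + b'$, that the assigned lift in $\Th{\x + d\check y}$ is $\la (h - b' - \delta k') + (b + b'), \bullet\ra = \la h + b - \delta k', \bullet \ra$, which agrees with $\la h + b, \bullet\ra$ on $Z_{k-1}(M)$ because $\la \delta k', z\ra = \la k', \partial z\ra = 0$ for every cycle $z$; this is exactly the lift assigned to $b$ by the de Rham map of $\x$. The characteristic-cocycle map is compatible by construction, as $\s + \check y$ has characteristic component $b + b'$.

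The only real subtlety to flag is the bookkeeping across the two gradings of the differential cochain complex: $\check y$ is a morphism in $\cH{k}{V}{\L}$, so it naturally sits in $\Cbg{k}{k-1}{V}{\L}$ with no form slot, whereas the trivialization categories are built from the shifted complex $\{\Cbg{k-1}{*}{V}{\L}\}$, where the slot $\Cbg{k-1}{k-1}{V}{\L}$ does carry a form. Writing $\check y = (b',k',0)$ records this reinterpretation, and one checks that $d\check y$ is computed identically in either complex, both differentials returning $(\delta b', -b' - \delta k', 0)$; this is the step deserving the most care. Finally, equivariance for the torsor actions of Theorem \ref{thm:torsorsquare} is immediate once the functors are known, since adding $\check y$ commutes with the additive action of the objects of $\cH{k-1}{V}{\L}$, so the induced bijections on $\pi_0$ intertwine the $\H{k-1}{V}{\L}$- and $H^{k-1}(M;\L)$-actions.
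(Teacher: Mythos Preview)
Your proposal is correct and follows the same approach as the paper: you define the functors by translation by $\check y$ (respectively $b'$) on objects and the identity on morphisms, exactly as the paper does, and then you carry out explicitly the compatibility checks that the paper dismisses as ``straightforward.'' Your added remarks on the bigrading and the equality of $d\check y$ in the two complexes are careful and correct, but do not constitute a different route.
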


In one sense, Proposition \ref{prop:IsoCocycles} says we can discuss trivializations of a differential cohomology class by simply picking a representing cocycle $\x \in \cH{k}{V}{\L}$.  A different choice $\x'$ will give rise to an isomorphic category of trivializations.  However, the isomorphism is not canonical!  It depends on $\y \in \Cbg{k}{k-1}{V}{\L}$, as opposed to only $d\y$.  The automorphisms
\[ \Aut(\x) = \frac{ \Zbg{k}{k-1}{V}{\L} }{ d \Cbg{k}{k-2}{V}{\L}} = \Hbg{k}{k-1}{V}{\L} \iso H^{k-2}(M;V/\L) \]
will induce automorphisms of the resulting category $\cT{\x}$.  One must be careful when performing cutting or pasting constructions, or when considering families of differential cohomology classes.  On the other hand, these subtleties can be ignored when using the curvature and de Rham classes because the sets $\Tc{\x}$ and $\Th{\x}$ only depend on the equivalence class $[\x] \in \H{k}{V}{\L}$.


\subsection{Hodge decomposition}\label{subsec:Hodge}
We now assume that $M$ is a closed (compact with no boundary) manifold with Riemannian metric $g$.  The Hodge Laplacian 
\[ \Delta_g^k  = dd^* + d^*d:\Omega^k(M)\to \Omega^k(M)\]
gives rise to the Hodge orthogonal decomposition of forms
\[ \Omega^k (M) = \Ker \Delta_g^k \oplus d\Omega^{k-1}(M) \oplus d^* \Omega^{k+1}(M),\]
and the canonical inclusion
\[ H^k(M;\R) \iso \Ker \Delta^k_g \into \Omega^k(M)\]
splits the short exact sequence
\[ \xymatrix{ 0 \ar[r]& d\Omega^{k-1}(M) \ar[r] & \Omega^k(M) \ar[r]& H^k(M;\R) \ar@{..>}@/_1pc/[l]_{Hodge}  \ar[r]& 0. } \]
If $V$ is a vector space, the Hodge decomposition extends to $\Omega^k(M;V)=\Omega^k(M)\otimes V $.

\begin{prop}\label{prop:Hodge}Let $(M,g)$ be a closed Riemannian manifold.  The Hodge decomposition of forms gives a canonical right inverse to the surjective map $\Tc{\x} \to \Th{\x}$, and it is equivariant with respect to the standard Hodge isomorphism $H^{k-1}(M;\R) \overset{\iso}\to \Ker \Delta_g^{k-1} \subset \Omega^{k-1}(M)$.
\[ \xymatrix{ \Tc[\l]{\x} \ar[r] & \Th[\l]{\x} \ar@{..>}@/_1pc/[l]_{Hodge}  && \Omega^{k-1}(M;V)_\L \ar[r] &H^{k-1}(M;V)_\L \ar@{..>}@/_1pc/[l]_{Hodge}} \]
\begin{proof}The sets $\Tc{\x}$ and $\Th{\x}$ are subsets of $\Omega^{k-1}(M;V)$ and $\H{k}{V}{0}$, respectively, so it suffices to construct a right inverse to $\Omega^{k-1}(M;V) \to \H{k}{V}{0}$.  As mentioned in \eqref{eq:IsoTopH0}, the short exact sequence \eqref{eq:exact2} gives an isomorphism
\[ \frac{\Omega^{k-1}(M;V)}{d\Omega^{k-2}(M;V)} \overset{\iso}\to \H{k}{V}{0}.\]
The Hodge decomposition gives a natural inclusion
\begin{align*} \H{k}{V}{0} &\iso \left( \Omega^{k-1}(M) \ominus d\Omega^{k-2}(M) \right) \otimes V \\&= \left( \Ker \Delta_g^{k-1} \oplus d^*\Omega^{k}(M)\right)\otimes V \subset \Omega^{k-1}(M;V). \end{align*}
This inclusion maps into the subspace of forms orthogonal to $d\Omega^{k-2}(M;V)$, so the composition
\[ \H{k}{V}{0} \into \Omega^{k-1}(M;V) \overset{/d\Omega^{k-2}}\longrightarrow \H{k}{V}{0} \]
is the identity.  Under this inclusion, classes in $\H{k}{V}{0}$ represented by closed forms are sent to their harmonic representative.
\end{proof}\end{prop}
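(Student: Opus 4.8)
The plan is to reduce the statement to a splitting of a single ambient linear map and then produce that splitting from the Hodge decomposition. The first observation is that $\Tc{\x}$ sits inside $\Omega^{k-1}(M;V)$ and $\Th{\x}$ sits inside $\H{k}{V}{0}$, and that the surjection $\Tc{\x}\to\Th{\x}$ of Proposition \ref{prop:trivsquare} is nothing but the restriction of the integration map $\Omega^{k-1}(M;V)\to\H{k}{V}{0}$. Hence it suffices to build a linear right inverse $s$ to this ambient map; restricting $s$ to $\Th{\x}$ will then yield the desired section, provided $s$ carries $\Th{\x}$ into $\Tc{\x}$. I would first record that, via the canonical isomorphism \eqref{eq:IsoTopH0}, the integration map is identified with the quotient projection $\Omega^{k-1}(M;V)\to\Omega^{k-1}(M;V)/d\Omega^{k-2}(M;V)$.

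Next I would construct $s$ from the Hodge decomposition. Tensoring $\Omega^{k-1}(M)=\Ker\Delta_g^{k-1}\oplus d\Omega^{k-2}(M)\oplus d^*\Omega^k(M)$ with $V$ exhibits $(\Ker\Delta_g^{k-1}\oplus d^*\Omega^k(M))\otimes V$ as the orthogonal complement of $d\Omega^{k-2}(M;V)$, hence as a linear section of the quotient by $d\Omega^{k-2}(M;V)$. Composing with the inverse of \eqref{eq:IsoTopH0} gives a linear map $s\colon\H{k}{V}{0}\to\Omega^{k-1}(M;V)$, and because its image lies in that orthogonal complement, the composite $\H{k}{V}{0}\overset{s}\to\Omega^{k-1}(M;V)\to\H{k}{V}{0}$ is the identity. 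To see that $s$ restricts to a map $\Th{\x}\to\Tc{\x}$, I would note that $s(\wt\chi)$ represents the same class as $\wt\chi$, so $\int_\bullet s(\wt\chi)=\la\wt\chi,\bullet\ra=\la[\x],\bullet\ra$ mod $\L$; thus $s(\wt\chi)\in\Tc{\x}$ and $s$ is a right inverse to $\Tc{\x}\to\Th{\x}$.

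The remaining and genuinely substantive step is equivariance. The section $\Tc{\x}\from\Th{\x}$ should intertwine the torsor structures over $\Omega^{k-1}(M;V)_\L$ and $H^{k-1}(M;V)_\L$ compatibly with the standard Hodge isomorphism $H^{k-1}(M;V)_\L\to\Ker\Delta_g^{k-1}$. Since $s$ is linear, for $\wt\chi_1,\wt\chi_2\in\Th{\x}$ their difference lies in the subgroup $H^{k-1}(M;V)_\L\subset\H{k}{V}{0}$ (via \eqref{eq:ses0Z}), so equivariance reduces to checking that $s$ restricted to this subgroup is harmonic projection. Here is the one point to verify carefully: a closed form $\beta$ has vanishing $d^*\Omega^k(M;V)$-component, because if $\beta=\beta_h+d\alpha+d^*\gamma$ then $d\beta=0$ gives $dd^*\gamma=0$, and integration by parts on the closed manifold $M$ forces $\|d^*\gamma\|^2=0$, hence $d^*\gamma=0$; therefore the orthogonal representative of the class of a closed form is exactly its harmonic part. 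Consequently $s$ sends a de Rham class to its harmonic representative, which has the same periods and so lands in $\Omega^{k-1}(M;V)_\L$ whenever the class lies in $H^{k-1}(M;V)_\L$. I expect this verification — that the a priori merely linear section reproduces harmonic projection on the structure group and preserves $\L$-periods — to be the main, if mild, obstacle; once the reduction to the ambient map is in place, everything else is formal.
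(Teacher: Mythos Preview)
Your proposal is correct and follows essentially the same approach as the paper: reduce to splitting the ambient quotient map $\Omega^{k-1}(M;V)\to\H{k}{V}{0}$ via the Hodge-orthogonal complement $(\Ker\Delta_g^{k-1}\oplus d^*\Omega^k(M))\otimes V$, and observe that closed forms are sent to their harmonic representatives. Your write-up is in fact more explicit than the paper's on two points the paper leaves implicit --- that the ambient section automatically restricts to $\Th{\x}\to\Tc{\x}$ (which is immediate since $\Tc{\x}$ is exactly the preimage of $\Th{\x}$), and the integration-by-parts verification that the coexact component of a closed form vanishes.
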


In other words, to any $\wt {\chi} \in \Th{\x}$, there is a unique form $\eta \in \Omega^{k-1}(M;V)$ satisfying both
\[ \int_\bullet \eta = \la \wt{\chi}, \bullet \ra \in \Hom(Z_{k-1}(M), V) \quad \text{and} \quad d^*\eta = 0. \]

Combined with Theorem \ref{thm:torsorsquare}, this says that when $(M,g)$ is closed Riemannian, there are natural compatible right inverses given by the dotted Hodge arrow in the following commutative squares.
\[ \xymatrix{ \T{\x}  \ar[d] \ar[r]& \Tt[\l]{\x} \ar[d]  && \H{k-1}{V}{\L} \ar[d]\ar[r]&H^{k-1}(M;\L) \ar[d] \\
\Tc[\l]{\x} \ar[r] & \Th[\l]{\x} \ar@{..>}@/_1pc/[l]_{Hodge}  && \Omega^{k-1}(M;V)_\L \ar[r] &H^{k-1}(M;V)_\L \ar@{..>}@/_1pc/[l]_{Hodge}} \]
This give an isomorphism of the pullback in the above left square
\[ \Tt{\x} \times_{\Th{\x}} \Tc{\x} \iso \Tt{\x} \times d\Omega^{k-2}(M;V).\]
Differential characters with harmonic curvature were considered in \cite{MR2518994}, where they were referred to as harmonic Cheeger--Simons characters.  Continuing this analogy, one can define harmonic trivializations as trivializations whose curvature $\eta$ satisfies $d^*\eta=0$.  For a degree $k$ differential cocycle, the set (of isomorphism classes) of harmonic trivializations is a torsor for the harmonic characters of degree $k-1$.

A natural question is whether the Hodge decomposition can be refined to give canonical right inverses $\H{k}{V}{\L} \from H^k(M;\L)$ or $\T{\x} \from \Tt{\x}$.  It is worthwhile to point out that no such construction can exist, as we now explain.  We restrict to the category of smooth closed Riemannian manifolds, where the morphisms are isometries (i.e. diffeomorphisms compatible with the Riemannian metric).

\begin{prop}\label{prop:NoSplittings}If $V\neq 0$, any right splitting of the short exact sequences \eqref{eq:exact1}, \eqref{eq:exact2}, or \eqref{eq:exact3} will not be functorial in the category of closed Riemannian manifolds.
\begin{proof}
To prove the non-existence, we construct an explicit counter-example.  Since any $V$ admits an inclusion $\R \into V$, we only need to check the case $V=\R$.   

For the short exact sequence \eqref{eq:exact1}, assume that we have some some construction $\Omega^k(-)_\L \to \H[-]{k}{V}{\L}$, denoted by $\omega \mapsto \chi_{\omega}$, that splits \eqref{eq:exact1}.  Let ${\mathbb T}^k = \R^k/\Z^k$ be the $k$-dimensional torus with the flat metric induced from the Euclidean metric on $\R^k$, and with the standard coordinates $\{x^i\}$ inherited from $\R^k$.  Let $\omega = dx^1 \wedge \ldots \wedge dx^k$ be the volume form (which is also harmonic), and let $f_a:{\mathbb T}^k \to {\mathbb T}^k$ be an isometry given by translation in the first coordinate; i.e. $f(x^1,\ldots, x^k) = (x^1 +a, x^2, \ldots, x^k)$ where $a \in \R$.  Note that $f_a^* \omega = \omega$.  To compare $f_a^* \chi_\omega$ with $\chi_\omega$, we evaluate on the $(k-1)$-cycle $0\times \mathbb T^{k-1}$:
\begin{align*}
\la f_a^*\chi_\omega - \chi_\omega,\> [0\times \mathbb T^{k-1}] \ra &= \la \chi_{\omega}, \> [a \times \mathbb T^{k-1}] - [0 \times \mathbb T^{k-1}]\ra \\
&= \int_{[0,a]\times \mathbb T^{k-1}} \omega \>=\> a \mod \L.
\end{align*}
For $a\notin \L$, it follows that $f_a^*(\chi_\omega) \neq \chi_{f_a^*\omega}$.  Hence, $\omega \mapsto \chi_{\omega}$ is not functorial.

Similarly, let $[c] \in H^k(\mathbb T^k;\L)$ be the standard generator.  Since $f_a$ is homotopic to the identity map, then $f_a^*[c]=[c]$.  Assume that $\chi_c \in \H[\mathbb T^k]{k}{\R}{\L}$ is a differential character, with curvature $\omega$, asociated to $[c]$.  We do not need to assume that $\omega$ is the volume form.  The same argument as above shows that
\[ \la f_a^*\chi_c - \chi_c, \> [0 \times \mathbb T^{k-1}] \ra = \int_{[0,a]\times \mathbb T^{k-1}} \omega \mod \L.\]
The above integral must be non-zero for some value of $a$, since
\[ \lim_{a\to 1} \int_{[0,a]\times \mathbb T^{k-1}} \omega = \int_{\mathbb T^k}\omega = 1.\]
Therefore, $f_a^*\chi_c \neq \chi_c = \chi_{f_a^*c}$, and the right splitting of \eqref{eq:exact2} cannot be functorial.

Finally, the volume form $\omega = dx^1 \wedge \ldots \wedge dx^k$ equals the generator of $H^k(\mathbb T^k;\L)$ in $H^k(\mathbb T^k;\R)$.  We can combine the above two arguments to show that $f^*_a \chi_{c,\omega} \neq \chi_{c,\omega} = \chi_{f_a^*c, f_a^*\omega}$ for any construction $A^k(-;\L) \to \H[-]{k}{\R}{\L}$.  Therefore, \eqref{eq:exact3} can not split naturally.
\end{proof}\end{prop}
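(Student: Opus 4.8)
The plan is to prove non-existence by producing a single explicit counterexample rather than arguing abstractly: the flat torus equipped with a translation isometry. First I would reduce to the case $V = \R$. Since every nonzero $V$ admits an inclusion $\R \hookrightarrow V$, a functorial splitting of any of the three sequences for a general $V$ would restrict, compatibly with $\R \hookrightarrow V$, to a functorial splitting of the corresponding sequence for $V = \R$; so it suffices to rule out a natural splitting when $V = \R$.

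The geometric input I would use is as follows. On $\mathbb T^k = \R^k/\Z^k$ with the flat metric and standard coordinates $\{x^i\}$, consider the translation $f_a(x^1,\ldots,x^k) = (x^1+a, x^2, \ldots, x^k)$ with $a \in \R$. This is an isometry of $(\mathbb T^k, g)$, it is homotopic to the identity, and it fixes the volume form, i.e. $f_a^*\omega = \omega$ for $\omega = dx^1 \wedge \cdots \wedge dx^k$. Consequently $f_a$ fixes every piece of ``coarse'' data appearing on the right-hand side of each sequence: the curvature form for \eqref{eq:exact1}, the integral characteristic class for \eqref{eq:exact2} (as $f_a$ acts trivially on $H^k(\mathbb T^k;\L)$), and the compatible pair defining $A^k(\mathbb T^k;\L)$ for \eqref{eq:exact3}. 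Were any of these splittings functorial, the differential character $\chi$ it assigns on $\mathbb T^k$ would therefore have to satisfy $f_a^*\chi = \chi$.

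The contradiction is then extracted by computing holonomy along a well-chosen cycle. Evaluating on $[0 \times \mathbb T^{k-1}]$ and observing that $f_a^*\chi - \chi$ is detected by the bounding chain $[0,a] \times \mathbb T^{k-1}$, the defining property of a differential character yields $\la f_a^*\chi - \chi,\, [0 \times \mathbb T^{k-1}]\ra = \int_{[0,a]\times \mathbb T^{k-1}} \omega = a \bmod \L$. For $a \notin \L$ this is nonzero, so $f_a^*\chi \neq \chi$, contradicting functoriality; this handles \eqref{eq:exact1}, where I may take $\omega$ to be the volume form. For \eqref{eq:exact2} the same computation applies to any $\chi$ lifting the generator $[c] \in H^k(\mathbb T^k;\L)$; here I do not control the curvature, but I only need $\lim_{a\to 1}\int_{[0,a]\times \mathbb T^{k-1}} \omega = \int_{\mathbb T^k}\omega = 1 \neq 0$, which holds because $[\omega] = [c]$ in de Rham cohomology, forcing the integral to be nontrivial $\bmod \L$ for some $a$. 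Finally \eqref{eq:exact3} follows by combining the two arguments, since the volume form represents the generator of $H^k(\mathbb T^k;\R)$, so $f_a^*\chi_{c,\omega} \neq \chi_{c,\omega}$ again.

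The main obstacle I anticipate is not any individual calculation but rather isolating one geometric mechanism—the variation $\int_{[0,a]\times \mathbb T^{k-1}} \omega$ of holonomy under translation—that simultaneously defeats all three splittings. The delicate case is \eqref{eq:exact2}, where the curvature of the purported $\chi_c$ is not under my control; there I must deduce nontriviality of the holonomy integral purely from the fact that the de Rham class of the curvature is the nonzero integral generator, rather than from an explicit choice of $\omega$.
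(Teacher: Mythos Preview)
Your proposal is correct and follows essentially the same approach as the paper: the same flat torus with translation isometry $f_a$, the same holonomy computation on $[0\times\mathbb T^{k-1}]$ via the bounding chain $[0,a]\times\mathbb T^{k-1}$, and the same handling of the three sequences (explicit $\omega$ for \eqref{eq:exact1}, the $\lim_{a\to1}$ argument for \eqref{eq:exact2}, and their combination for \eqref{eq:exact3}). Even the reduction to $V=\R$ and the identification of \eqref{eq:exact2} as the delicate case match the paper's treatment.
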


\begin{cor}There do not exist right inverses $\T{-} \from \Tc{-}$, $\T{-} \from \Tt{-}$, or $\T{-} \from \Tt{-} \times_{\Th{-}} \Tc{-}$ that are natural with respect to Riemannian isometries when $V\neq 0$.
\begin{proof}By the equivariance from Theorem \ref{thm:torsorsquare}, a right inverse
\[ \xymatrix{ \T{-} \ar[r] & \Tc[\l]{-} \ar@{..>}@/_1pc/[l] }\]
would induce a functorial splitting of the short exact sequence \eqref{eq:exact1}.  But, this was proved to not exist in Proposition \ref{prop:NoSplittings}.  For the same reasons, there can not exist functorial right inverses $\T{-} \from \Tt{-}$ or $\T{-} \from \Tt{-} \times_{\Th{-}} \Tc{-}$.
\end{proof}\end{cor}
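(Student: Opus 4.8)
The plan is to use Theorem \ref{thm:torsorsquare}, which renders the trivialization square equivariant over the fundamental square \eqref{eq:FundSquareV} of $\H{k-1}{V}{\L}$, in order to push any hypothetical natural right inverse down to a natural section of one of the exact sequences \eqref{eq:exact1}, \eqref{eq:exact2}, \eqref{eq:exact3}, and then to invoke Proposition \ref{prop:NoSplittings}. First I would specialize to the zero cocycle $0 = (0,0,0) \in \cH{k}{V}{\L}$, which has vanishing characteristic class and is fixed by every isometry (indeed by every smooth map), since pullback preserves $0$. The zero trivialization is then a canonical, pullback-natural base point in each of $\T{0}$, $\Tt{0}$, $\Tc{0}$, $\Th{0}$. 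Combining these base points with the torsor identifications of Proposition \ref{prop:Obstruction}, the trivialization square at $0$ becomes canonically and naturally isomorphic, as a square of pointed sets, to the fundamental square \eqref{eq:FundSquareV} for $\H{k-1}{V}{\L}$; concretely $\T{0} \cong \H{k-1}{V}{\L}$, $\Tt{0} \cong H^{k-1}(M;\L)$, $\Tc{0} \cong \Omega^{k-1}(M;V)_\L$, and $\Th{0} \cong H^{k-1}(M;V)_\L$, with the trivialization maps matching those of \eqref{eq:FundSquareV}.

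Next I would match each claimed right inverse with the section it produces. Restricting a natural right inverse $\T{-} \from \Tc{-}$ to the zero cocycle and applying the identifications above yields a natural set-section of the curvature surjection $\H{k-1}{V}{\L} \to \Omega^{k-1}(M;V)_\L$, that is, of \eqref{eq:exact1}. In the same way, $\T{-} \from \Tt{-}$ gives a natural section of the characteristic-class surjection \eqref{eq:exact2}, and $\T{-} \from \Tt{-} \times_{\Th{-}} \Tc{-}$ gives one of \eqref{eq:exact3}: here the fibre product $\Tt{0} \times_{\Th{0}} \Tc{0}$ is identified with the pullback $A^{k-1}(M;\L)$ that appears as the target of \eqref{eq:exact3}. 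The equivariance supplied by Theorem \ref{thm:torsorsquare} is what guarantees these induced sections are honest natural transformations on the category of closed Riemannian manifolds with isometries. With this translation in hand, I would simply cite Proposition \ref{prop:NoSplittings}, whose flat-torus counterexample $f_a \colon \mathbb T^{k-1} \to \mathbb T^{k-1}$ already rules out every such natural section when $V \neq 0$.

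The main obstacle is not depth but bookkeeping: I must make sure the passage from a natural right inverse of sets to a natural section of the group-level sequences is legitimate. Two points need checking. First, that the pointed identifications above commute with isometry pullback, so that naturality genuinely descends; this follows from the naturality of the torsor structure in Theorem \ref{thm:torsorsquare} together with the fact that the base point $0$ is preserved by all maps. Second, that Proposition \ref{prop:NoSplittings} still applies even though a right inverse of sets need not be a group homomorphism. This causes no trouble, because the counterexample in the proof of Proposition \ref{prop:NoSplittings} is purely set-theoretic: it evaluates any putative assignment $\omega \mapsto \chi_\omega$ on the translations of the flat torus and exhibits $f_a^* \chi \neq \chi$, using naturality alone and no linearity. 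Hence all three non-existence claims follow simultaneously.
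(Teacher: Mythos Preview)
Your proposal is correct and follows essentially the same strategy as the paper: reduce a hypothetical natural right inverse on trivializations to a natural section of one of the sequences \eqref{eq:exact1}--\eqref{eq:exact3}, then invoke Proposition~\ref{prop:NoSplittings}. Your version is more explicit than the paper's, which simply appeals to the equivariance of Theorem~\ref{thm:torsorsquare} without spelling out the mechanism; your choice to specialize to the zero cocycle $\x = 0$ and use the canonical basepoint given by the zero trivialization is exactly the concrete way to make that appeal precise. Your observation that Proposition~\ref{prop:NoSplittings} requires only a natural set-theoretic section (not a group homomorphism) is a genuine clarification the paper leaves implicit.
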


\subsection{Compatibility with products}\label{subsec:Product}
There is a natural product in $\check{H}^*$, and it is defined at the cochain level in \cite{HS05}.  For $\L_i \subset V_i$, the homomorphism
\[ \Cbg{p_1}{q_1}{V_1}{\L_1} \otimes \Cbg{p_2}{q_2}{V_2}{\L_2} \to \Cbg{p_1+p_2}{q_1+q_2}{V_1 \otimes V_2}{\L_1 \otimes \L_2} \]
is defined by the formula
\[ \x_1 \cdot \x_2 \= \Big( c_1 \cup c_2 \>,\>  (-1)^{|\omega_1|} c_1 \cup h_2 + h_1 \cup \omega_2 + B(\omega_1, \omega_2) \> , \> \omega_1 \wedge \omega_2  \Big), \]
where $\L_1 \otimes \L_2 \subset V_1\otimes V_2$.  Here $\x_i = (c_i, h_i,\omega_i)$, and $B(\bullet,\bullet)$ is any natural chain homotopy between the cup product $\cup$ and the wedge product $\wedge$.  The product is a graded derivation at the level of cochains,
\begin{equation}\label{eq:dga} d( \x_1 \cdot \x_2) = d\x_1 \cdot \x_2 + (-1)^{|\x_1|} \x_1 \cdot d\x_2  \in \Cbg{*}{*}{V_1\otimes V_2}{\L_1\otimes \L_2},
\end{equation}
so it descends to a homomorphism 
\[ \H{k_1}{V_1}{\L_1} \otimes \H{k_2}{V_2}{\L_2} \to \H{k_1+k_2}{V_1\otimes V_2}{\L_1\otimes \L_2}.\]
In general, this makes $\H{*}{V}{\L}$ into a (left or right) module over $\H{*}{\R}{\Z}$.  When $V=\R$ or $\C$ and $\L\subset V$ is a subring, this gives $\H{*}{V}{\L}$ a graded commutative ring structure.  

For general $\L\subset V$, the product
\begin{equation}\label{eq:0product} \H{k_1}{\R}{0} \otimes \H{k_2}{\R}{\L} \to \H{k_1+k_2}{\R}{0}\end{equation}
can be described using the isomorphism $\H{k}{\R}{0} \iso \Omega^{k-1}(M;V)\big/ d\Omega^{k-2}(M;V)$ from \eqref{eq:IsoTopH0}.  To an equivalence class of $(k-1)$-forms $[\eta_1] \in \H{k_1}{\R}{0}$, and a character $\chi_2 \in \H{k_2}{\R}{\L}$ with curvature $\omega_2$, the product is the equivalence class
\[ [\eta_1 \wedge \omega_2 ] \in \H{k_1+k_2}{\R}{0}.\]

It follows immediately that trivializations and the torsor square from Theorem \ref{thm:torsorsquare} are compatible with the various natural product structures.

\begin{prop}Let $\x_i =(c_i,h_i,\omega_i) \in \H{k_i}{V}{\L_i}$ for $k=1,2$.  If $\s=(b,k,\eta) \in  \cT{\x_1}$, then
\[ \s \cdot \x_2 \in  \cT{\x_1\cdot \x_2}. \]
The characteristic cocycle of $\s \cdot \x_2$ is $b\cup c_2$, the curvature is $\eta \wedge \omega_2$, and the de Rham class is given by the product \eqref{eq:0product}.
\begin{proof}
Using equation \eqref{eq:dga} and the assumption that $d\s = \x_1$, we have
\[ d(\s \cdot \x_2) = d\s \cdot \x_2 +(-1)^k \s \cdot d\x_2 = \x_1 \cdot \x_2.\]
The characteristic cocycle and curvature follow from the definition of $\s\cdot \x_2$.  The de Rham class is the equivalence class of the curvature $\eta \wedge \omega_2$ modulo $d\Omega^{k_1+k_2-2}(M;V)$, which is precisely the product \eqref{eq:0product}.
\end{proof}\end{prop}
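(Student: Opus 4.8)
The plan is to derive everything from the graded Leibniz rule \eqref{eq:dga} for the cochain-level product, after a short check of bi-degrees. First I would confirm that $\s\cdot\x_2$ lands in the group where a trivialization of $\x_1\cdot\x_2$ must live. Since $\s\in\Cbg{k_1-1}{k_1-1}{V}{\L_1}$ and $\x_2\in\Zbg{k_2}{k_2}{V}{\L_2}=\Cbg{k_2}{k_2}{V}{\L_2}$, the product homomorphism carries the pair into $\Cbg{k_1+k_2-1}{k_1+k_2-1}{V\otimes V}{\L_1\otimes\L_2}$. The cocycle $\x_1\cdot\x_2$ has degree $k_1+k_2$, so by Definition \ref{defn:trivgeo} its trivializations are exactly the cochains in $\Cbg{k_1+k_2-1}{k_1+k_2-1}{V\otimes V}{\L_1\otimes\L_2}$ mapping to it under $d$. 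In other words, the product drops the first index by one relative to $\x_1\cdot\x_2$, placing $\s\cdot\x_2$ in the second row of the bi-graded picture, exactly where trivializations are defined; in particular $\s\cdot\x_2$ automatically carries a differential-form component.

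Next I would apply \eqref{eq:dga} with $\x_1$ replaced by $\s$, giving
\[ d(\s\cdot\x_2)=d\s\cdot\x_2+(-1)^{|\s|}\s\cdot d\x_2.\]
Since $\s$ is a trivialization of $\x_1$ we have $d\s=\x_1$, and since $\x_2$ is a differential cocycle we have $d\x_2=0$, so the second term drops out (the sign is therefore immaterial) and $d(\s\cdot\x_2)=\x_1\cdot\x_2$. This is precisely the defining condition $\s\cdot\x_2\in\cT{\x_1\cdot\x_2}$.

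Finally I would read the three invariants off the explicit formula for $\s\cdot\x_2$. With $\s=(b,k,\eta)$ and $\x_2=(c_2,h_2,\omega_2)$, its first component is $b\cup c_2$, giving the characteristic cocycle, and its third component is $\eta\wedge\omega_2$, giving the curvature. The de Rham class is then the class of $\eta\wedge\omega_2$ in $\Omega^{k_1+k_2-1}(M;V\otimes V)/d\Omega^{k_1+k_2-2}(M;V\otimes V)\iso\H{k_1+k_2}{V\otimes V}{0}$ under \eqref{eq:IsoTopH0}; unwinding the description \eqref{eq:0product} of the product at the level of de Rham classes—pairing the de Rham class $[\eta]$ of $\s$ with the curvature $\omega_2$ of $\x_2$—shows this is exactly $[\eta\wedge\omega_2]$, as claimed.

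The only step that is not a purely formal consequence of \eqref{eq:dga} is the bi-degree bookkeeping in the first paragraph: one must confirm that the first index of the product is $k_1+k_2-1$ rather than $k_1+k_2$, so that $\s\cdot\x_2$ lands in the trivialization complex and retains a curvature form. Once that is in place, the remaining assertions follow immediately from the derivation property and the explicit product formula.
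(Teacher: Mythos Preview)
Your proposal is correct and follows essentially the same approach as the paper: apply the graded Leibniz rule \eqref{eq:dga} together with $d\s=\x_1$ and $d\x_2=0$, then read off the characteristic cocycle, curvature, and de Rham class from the explicit product formula. Your additional bi-degree check in the first paragraph is a useful clarification that the paper leaves implicit, but otherwise the arguments coincide.
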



\subsection{Compatibility with integration}\label{subsec:Integration}
Hopkins--Singer also define the notion of an $\check{H}$-orientation for a smooth map $f:X \to B$ of relative dimension $n$.  The details can be found in Sections 2.4 and 3.4 of \cite{HS05}, but it amounts to factoring $f$ through an embedding $X \into B\times \R^N$ and choosing a differential Thom cocycle for the normal bundle.  Such a cocycle (with fiberwise compact support) $\check{U} \in \cHc[\nu]{N-n}{\R}{\Z}$ is a differential refinement of a Thom class for the normal bundle $\nu \to X$.  When $f:X\to B$ is a smooth fiber bundle whose fibers are diffeomorphic to a closed manifold $M$, an orientation and Riemannian metric on $M$ define an $\check{H}$-orientation up to isomorphism.

Given an $\check{H}$-oriented map $f:X \to B$, of relative dimension $n$, the usual pushforward construction in cohomology extends to the differential cochain complex, giving the homomorphisms
\[ \Cbg[X]{p}{q}{V}{\L} \overset{\pi^* \cup \check{U}}\longrightarrow \Cbgc[\nu]{p+N-n}{q+N-n}{V}{\L}\to \Cbgc[B\times \R^N]{p+N-n}{q+N-n}{V}{\L} \overset{\int_{\R^N}}\longrightarrow \Cbg[B]{p-n}{q-n}{V}{\L}.  \]
Here, $\int_{\R^N}$ is defined using the slant product on singular cochains and integration on forms.  The composition, in both the differential cochain and singular cochain complex, is referred to as integration and is denoted
\begin{align}\label{eq:int1} \int_{X/B}: \Cbg[X]{p}{q}{\R}{\L} &\longrightarrow \Cbg[B]{p-n}{q-n}{\R}{\L}, \\
\label{eq:int2} \int_{X/B}: C^q(X;\L) &\longrightarrow C^{q-n}(B;\L).
\end{align}
This is compatible with the usual integration of differential forms; i.e. if $\x \in \cH[X]{k}{\R}{\L}$, then
\begin{equation}\label{eq:IntegrationCurvature}
\curv \left( \int_{X/B} \x \right) = \int_{X/B} \curv(\x) \quad \in \Omega^{k-n}(B;V),
\end{equation}
where the right-hand integral takes place in the de Rham complex.

The integration functor satisfies a generalized Stokes formula
\begin{equation}\label{eq:Stokes} d \int_{X/B} \x = \int_{X/B} d\x \quad +\quad (-1)^{|\x|-n} \int_{\d X/B} \x.
\end{equation}
Assuming the fibers of $f$ have no boundary, the Stokes formula implies $\int_{X/B}$ is a cochain map and induces homomorphisms
\[ \int_{X/B} : \H[X]{k}{V}{\L} \longrightarrow \H[B]{k-n}{V}{\L}\]
compatible with the usual pushforward in cohomology.  When $\L=0$, this homomorphism is given by integrating equivalence classes of forms:
\begin{equation}\label{eq:int3} \int_{X/B}: \H[X]{k}{V}{0} \iso \frac{\Omega^{k-1}(X;V)}{d\Omega^{k-2}(X;V)}  \overset{\int_{X/B}}\longrightarrow \frac{\Omega^{k-n-1}(B;V)}{d\Omega^{k-n-2}(B;V)} \iso \H[B]{k-n}{V}{0}.\end{equation}
It is straightforward to check that trivializations behave well with respect to integration.

\begin{prop}\label{prop:Integration}Suppose that $f:X \to B$ is an $\check{H}$-oriented map of relative dimension $n$ and whose fibers have no boundary.  If $\x \in \cH[X]{k}{V}{\L}$ admits a trivialization, integration induces a functor
\[ \int_{X/B}\>:\> \cT{\x} \longrightarrow \cT{\int_{X/B} \x} \]
that is compatible with the various forms of integration.  More precisely, the following diagram containing the two torsor squares commutes; the diagonal arrows are induced by the integrations \eqref{eq:int1}, \eqref{eq:int2}, \eqref{eq:int3}, and the ordinary integration of differential forms.
\[ \xymatrix@=3mm{\T{\x} \ar[rr] \ar[dd] \ar[dr] && \Tt{\x} \ar'[d][dd] \ar[dr] \\
 &\T{\int \x} \ar[dd] \ar[rr]  && \Tt{\int \x} \ar[dd] \\
 \Tc{\x} \ar'[r][rr] \ar[dr] && \Th{\x} \ar[dr] \\
 & \Tc{ \int \x} \ar[rr] && \Th{\int \x} } \]
\begin{proof}
The Stokes formula implies that if $d\s = \x$, then $d\int_{X/B}\s = \int_{X/B}\x$.  If $d\y = \s_2 - \s_1$, then $d \int_{X/B} \y = \int_{X/B} \s_2 - \int_{X/B} \s_1$.  Therefore, the integration in \eqref{eq:int1} induces a functor $\cT{\x} \to \cT{\int_{X/B} \x}$.  The commutativity of the diagram is an immediate consequence of the the fact that integration in the differential cochain complex commutes with integration in the singular and de Rham complexes.
\end{proof}\end{prop}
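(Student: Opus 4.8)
The plan is to reduce the entire statement to one fact: under the hypothesis that the fibers of $f$ have no boundary, integration $\int_{X/B}$ is a cochain map on differential cochains. Indeed, the generalized Stokes formula \eqref{eq:Stokes} gives $d\int_{X/B}\x = \int_{X/B}d\x + (-1)^{|\x|-n}\int_{\partial X/B}\x$, and since the fibers are closed the boundary term vanishes, leaving $d\circ\int_{X/B} = \int_{X/B}\circ\, d$ on all of $\Cbg[X]{*}{*}{V}{\L}$. First I would use this to define the functor on objects. Given $\s\in\Ob\cT{\x}$, so that $\s\in\Cbg[X]{k-1}{k-1}{V}{\L}$ with $d\s=\x$, the degree bookkeeping of \eqref{eq:int1} places $\int_{X/B}\s$ in $\Cbg[B]{k-1-n}{k-1-n}{V}{\L}$, which is exactly the group of trivializations of $\int_{X/B}\x\in\Zbg[B]{k-n}{k-n}{V}{\L}$; the cochain-map property then yields $d\int_{X/B}\s=\int_{X/B}\x$, so $\int_{X/B}\s\in\Ob\cT{\int_{X/B}\x}$.

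Next I would handle morphisms. A morphism $\s_1\to\s_2$ in $\cT{\x}$ is represented by $\y\in\Cbg[X]{k-1}{k-2}{V}{\L}$ with $d\y=\s_2-\s_1$, well defined modulo $d\Cbg[X]{k-1}{k-3}{V}{\L}$. I would send it to $\int_{X/B}\y$; the cochain-map identity gives $d\int_{X/B}\y=\int_{X/B}\s_2-\int_{X/B}\s_1$, so this is a morphism $\int_{X/B}\s_1\to\int_{X/B}\s_2$, and independence of the representative follows because $\int_{X/B}$ sends $d\Cbg[X]{k-1}{k-3}{V}{\L}$ into $d\Cbg[B]{k-1-n}{k-3-n}{V}{\L}$. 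Preservation of identities and composition is immediate from the additivity of $\int_{X/B}$, and passing to $\pi_0$ produces the induced map $\T{\x}\to\T{\int_{X/B}\x}$.

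Finally I would prove commutativity of the cube. Its front and back faces are the two torsor squares of Theorem \ref{thm:torsorsquare}, for $\int_{X/B}\x$ and for $\x$, which already commute, so it remains to check the four side faces, i.e. that the characteristic-cocycle, curvature, and de Rham maps each intertwine the appropriate integration \eqref{eq:int2}, fiber integration of forms, and \eqref{eq:int3}. The key observation is that the Hopkins--Singer pushforward on differential cochains is assembled component by component from the singular-cochain pushforward \eqref{eq:int2} and fiberwise integration of forms. Writing $\s=(b,k,\eta)$, the characteristic-cocycle component of $\int_{X/B}\s$ is then $\int_{X/B}b$ in the sense of \eqref{eq:int2}, its curvature component is the fiber integral of $\eta$ --- which is precisely the curvature compatibility \eqref{eq:IntegrationCurvature} --- and the induced de Rham class is computed by \eqref{eq:int3}. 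These three identities force all four side faces, and hence the whole cube, to commute.

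I expect the last step to be the main obstacle: one must verify that projection onto each of the three components of a differential cochain commutes with the composite $\pi^*\cup\check{U}$ followed by $\int_{\R^N}$ that defines the pushforward. This is where the precise form of the differential Thom cocycle $\check{U}$ and the behavior of the slant product enter, and it is the only point requiring more than formal manipulation. Once one grants the Hopkins--Singer fact that differential-cochain integration refines both singular and de Rham integration, however, the verification becomes bookkeeping rather than a new idea.
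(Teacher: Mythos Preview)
Your proposal is correct and follows essentially the same approach as the paper: use the Stokes formula \eqref{eq:Stokes} (with vanishing boundary term) to see that $\int_{X/B}$ is a cochain map, apply this to objects and morphisms to obtain the functor, and then deduce commutativity of the cube from the fact that integration in the differential cochain complex is compatible componentwise with the singular and de Rham pushforwards. Your write-up is in fact more detailed than the paper's own proof, which treats the last compatibility as a black-box consequence of the Hopkins--Singer construction rather than something to be verified; your identification of this as the only nontrivial input is exactly right.
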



\section{Trivializing circle bundles}\label{sec:Examples}

The category $\cH{2}{\R}{\Z}$ is equivalent to the category of principal $\R/\Z$-bundles with connection.  We now consider the relationship between trivializations of principal $\R/\Z$-bundles and the torsor square for trivializations of degree 2 classes.

Begin by fixing a principal $\R/\Z$-bundle $P\overset{\pi}\to M$ with connection $\Theta$.  A trivialization of $P$, not necessarily preserving the connection, is defined as a bundle isomorphism $P\iso M\times \R/\Z$; this is naturally equivalent to a global section $p \in C^\infty(M,P)$.  The bundle $P$ is trivializable if and only if its characteristic class $c(P)=0 \in H^2(M;\Z)$.  Assuming $c(P) = 0$, the set of global sections naturally fits into the following commutative square.
\begin{equation}\label{eq:SquareU(1)Bun} \xymatrix{ C^\infty(M,P) \ar[r] \ar[d]&C^\infty(M,P)_{/_\text{htpy}} \ar[d] \\
\{ p^*\Theta \in \Omega^1(M) \st p \in C^\infty(M,P) \} \ar[r] & \{ \text{Lift of }\hol_\Theta \text{to }\R\} } \end{equation}
The top right corner consists of global sections modulo homotopy through the space of sections.  The bottom left corner consists of all globally defined connection 1-forms on $M$ for the connection $\Theta$.  The bottom right corner consists of all lifts of the holonomy from $\R/\Z$ to $\R$.

The group $C^\infty(M,\R/\Z)$, with product given by pointwise addition in $\R/\Z$, acts freely and transitively on the space of global sections.  For $p \in C^\infty(M,P)$ and $f \in C^\infty(M,\R/\Z)$, the resulting section is defined by
\[ \big(p+f \big)(x) \= p(x) + f(x).\]
Pulling back the connection along the section $p+f$ results in the change
\[ (p+f)^*\Theta = p^*\Theta + df.\]
(We do not need to include the Adjoint action because $\R/\Z$ is abelian.)

Any two lifts of $\hol_\Theta$ to $\R$ will differ by an element in $\Hom(H_1(M),\Z)$.  A priori, one may think they only need to differ by an element in $\Hom(Z_1(M),\Z)$.  We assume, though, that our lifts are ``smooth" in the sense that integrating the curvature form calculates the difference between the $\R$-valued holonomies of homotopic paths.  More concisely, the lift is an element in $\H{2}{\R}{0}$.

It follows that the commutative square \eqref{eq:SquareU(1)Bun} is a torsor over the following commutative square, which was explained in Example \ref{ex:functions} to be canonically isomorphic to the standard commutative square for $\H{1}{\R}{\Z}$.
\[ \xymatrix@C=5mm{ \H{1}{\R}{\Z} \ar@{}[r]|(.30){\iso} & C^\infty(M,\R/\Z) \ar[r] \ar[d]& C^\infty(M, \R/\Z)_{/_\text{htpy}} \ar[d] \ar@{}[r]|(.55){\iso}&H^1(M;\Z)\\
 \Omega^1(M)_\Z \ar@{}[r]|(.30){\iso}& \{ df  \st f \in C^\infty(M, \R/\Z)\} \ar[r]& \Hom(H_1(M), \Z) \ar@{}[r]|(.55){\iso}& H^1(M;\R)_\Z.} \]
The bottom right isomorphism was proven in Lemma \ref{lem:uct}.

We wish to show that the square \eqref{eq:SquareU(1)Bun} is equivalent to the square from Theorem \ref{thm:torsorsquare}.  We should first explain the equivalence, discussed in Example 2.7 of \cite{HS05}, between the category of principal $\R/\Z$-bundles with connection on $M$ and the category $\cH{2}{\R}{\Z}$.  The key idea is that a bundle is determined completely by its local sections, and the connection $\Theta$ is determined by local connection forms $\nabla$.

To any $\R/\Z$-bundle with connection $(P,\Theta) \overset{\pi}\to M$, one can associate its sheaf of sections: to the open set $U \subset M$ is assigned $C^\infty(U, P)$, which is a torsor for $C^\infty(U, \R/\Z)$ when it is non-empty.  The connection $\Theta$ determines local connection 1-forms, or maps
\[ \nabla: C^\infty(U,P) \to \Omega^1(U) \]
that transform according to the previously mentioned rule $\nabla^{p+f} = \nabla^{p} + df$.  A morphism of $(P,\Theta)$ induces a morphism of the resulting sheaf $C^\infty(-,P)$ that preserves the connection $\nabla$.

Similarly, to a differential cocycle $\x \in \cH{2}{\R}{\Z}$  one can consider the sheaf of trivializations
\[ U \mapsto \T{\x\big|_{U}}.\]
When $\T{\x\big|_{U}}$ is non-empty, it is a torsor for $\H[U]{1}{\R}{\Z} \iso C^\infty(U, \R/\Z)$, and $\T{\x\big|_{U}} \to \Tc{\x\big|_{U}}$ gives a local connection form $\nabla$ transforming by $\nabla^{\s + f} = \nabla^{\s} + df$.

Now, we assume that $(P,\Theta) \overset{\pi}\to M$ is represented by $\x \in \cH{2}{\R}{\Z}$ in the sense that the resulting sheaf of sections with connection are equal, or that we have chosen an isomorphism between them.

\begin{prop}\label{prop:degree2}If $(P,\Theta) \overset{\pi}\to M$ is represented by $\x \in \cH{2}{\R}{\Z}$, then the commutative square \eqref{eq:SquareU(1)Bun} is in natural bijection with the commutative square
\begin{equation} \label{eq:SquareC1}\xymatrix{ \T{\x} \ar[r] \ar[d] &\Tt{\x} \ar[d]\\
\Tc{\x} \ar[r] & \Th{\x}. }\end{equation}
\begin{proof}
We assume $c(P)=0$, since all of the relevant spaces are empty otherwise.  An element in $\T{\x}$ is precisely the choice of a global section of the sheaf 
\[ \big( U\mapsto \T{\x\big|_{U}} \big) \iso \big( U \mapsto C^\infty(U, P) \big),\]
which is equivalent to a global section $p \in C^\infty(M,P)$.  The sheaves of local sections/trivializations are equivariant with respect to the action of $C^\infty(-, \R/\Z) \iso \H[-]{1}{\R}{\Z}$, so the bijection
\[ \T{\x} \longleftrightarrow C^\infty(M,P) \]
is also equivariant.  The squares  \eqref{eq:SquareU(1)Bun} and \eqref{eq:SquareC1} are both torsors over the square for $\H{1}{\R}{\Z}$, and we have an equivariant map between the top-left corner of both squares.  While bijections between the remaining corners can be constructed explicitly, applying Lemma \ref{lem:InducedTorsorMaps} immediately implies they uniquely exist.
\end{proof}\end{prop}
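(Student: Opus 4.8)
The plan is to exploit the sheaf-theoretic dictionary set up in the paragraphs immediately preceding the statement, together with the observation that both squares are torsors over the \emph{same} base square—the standard commutative square for $\H{1}{\R}{\Z}$. This reduces the problem to constructing a single equivariant bijection on one corner and then letting formal torsor theory produce the rest. First I would dispose of the trivial case: if $c(P)\neq 0\in H^2(M;\Z)$, then $P$ has no global section and, by Proposition \ref{prop:Obstruction}, every corner of \eqref{eq:SquareC1} is empty as well, so the claimed bijection of squares holds vacuously. Thus I assume $c(P)=0$ throughout.

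Next I would construct the bijection on the top-left corners. By the hypothesis that $(P,\Theta)$ is represented by $\x$, the sheaf of local sections $U\mapsto C^\infty(U,P)$ is identified with the sheaf of local trivializations $U\mapsto \T{\x|_U}$. Taking global sections over $M$ yields a bijection $\T{\x}\longleftrightarrow C^\infty(M,P)$. I would then verify that this bijection is equivariant for the natural action of $C^\infty(M,\R/\Z)\iso \H{1}{\R}{\Z}$: on the bundle side this group acts by $p\mapsto p+f$, while on the cocycle side it acts via the torsor structure of Proposition \ref{prop:Obstruction}, and the two actions agree precisely because the identifying sheaf isomorphism is $C^\infty(-,\R/\Z)$-equivariant.

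With the top-left bijection in hand, I would record that both squares are torsors over the common base square for $\H{1}{\R}{\Z}$, whose corners are $C^\infty(M,\R/\Z)$, $H^1(M;\Z)$, $\Omega^1(M)_\Z$, and $H^1(M;\R)_\Z$ (the last identification using Lemma \ref{lem:uct}). For \eqref{eq:SquareC1} this is exactly Theorem \ref{thm:torsorsquare}; for \eqref{eq:SquareU(1)Bun} it is the torsor structure established in the preceding discussion of sections and holonomy lifts. Since both are torsor squares over a common base and I already have an equivariant bijection on one corner, I would invoke Lemma \ref{lem:InducedTorsorMaps} to conclude that the remaining three corner bijections exist, are unique, are equivariant, and are compatible with the square maps—so that the two commutative squares are in natural bijection.

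The only place where genuine geometry, as opposed to formal torsor bookkeeping, enters is the equivariance check for the top-left bijection; everything downstream (matching $p^*\Theta$ with elements of $\Tc{\x}$, and the lifted holonomies with $\Th{\x}$) is then forced and needs no separate verification. Concretely, the single compatibility to get right is that pulling back the connection along $p+f$ changes it by $df$, mirroring exactly how the torsor action of $f\in\H{1}{\R}{\Z}$ alters the curvature component of a trivialization. Once this is recorded, the uniqueness clause of Lemma \ref{lem:InducedTorsorMaps} guarantees that the induced maps coincide with any explicit geometric correspondence one might otherwise write out by hand, which is why I expect the equivariance step—not the construction of the remaining corners—to be the main obstacle.
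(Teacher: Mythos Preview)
Your proposal is correct and follows essentially the same approach as the paper's proof: dispose of the case $c(P)\neq 0$, use the sheaf identification to get an equivariant bijection $\T{\x}\leftrightarrow C^\infty(M,P)$ on the top-left corners, observe that both squares are torsors over the standard square for $\H{1}{\R}{\Z}$, and then invoke Lemma~\ref{lem:InducedTorsorMaps} to produce the remaining three bijections. Your write-up is in fact a bit more explicit about the equivariance check and why the induced maps must agree with any hand-built geometric correspondence, but the strategy is identical.
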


Finally, we remark that Propositions \ref{prop:Integration} and \ref{prop:degree2} combine in a useful way.  Suppose that 
\[ X \overset{f}\longrightarrow B\]
is an $\check{H}$-oriented map of relative dimension $n$, and that we have a principal $\R/\Z$-bundle with connection $(P,\Theta)\to B$.  If we can represent this bundle by $\int_{X/B} \x$ for some $\x \in \cH[X]{n+2}{\R}{\Z}$,  it may be more convenient to trivialize the bundle on $B$ by trivializing $\x$ in $X$.

We first make a couple of remarks about trivializing $\R/\Z$-bundles with connection.  There is no torsion in the cohomology group $H^1(B;\Z)$, so the homomorphism $H^1(B;\Z) \to H^1(B;\R)_\Z$ is an isomorphism.  This implies that lifting the holonomy to $\R$ is equivalent to specifying the homotopy class of a global section.  Also, the choice of a global connection form on $B$ determines a trivial connection on $P$ by $\Theta -\pi^* p^* \Theta \in \Omega^1(P)$.  On each connected component of $B$, the choice of a trivial connection only determines a global section up to a constant $\R/\Z$-phase.  Algebraically, this is explained by the fact that $\H[B]{1}{\R}{\Z} \to \Omega^1(B)_\Z$ has non-trivial kernel $H^0(B;\R/\Z)$.

Assume that $\int_{X/B} \x$ represents $(P,\Theta) \overset{\pi}\to B$.  By Propositions \ref{prop:Integration} and \ref{prop:degree2}, a trivialization $\s \in \T{\x}$ induces a global section of $P$ by $\int_{X/B} \s \in \T{\int_{X/B}\x}$.  The characteristic class of $\s$ determines the homotopy class of the induced global section, the curvature of $\s$ induces a trivial connection on $P$, and the de Rham class of $\s$ lifts the holonomies of $(P,\Theta)$ to $\R$.  If $H^n(X;\R/\Z)=0$, then $\T{\x} \to \Tc{\x}$ is a bijection, so the global section of $P$ can be completely determined by a differential form on $X$.  For reasons such as this, it is sometimes easier to work with trivializations in the space $X$.

\begin{lemma}\label{lem:InducedTorsorMaps}Let $0 \to G_0 \to G_1 \to G_2 \to 0$ be a short exact sequence of groups.  Suppose $A_i$ and $B_i$ are (left) torsors over the groups $G_i$, and there are $G_1 \to G_2$ equivariant maps $A_1 \overset{\alpha}\to A_2$ and $B_1 \overset{\beta}\to B_2$.  Then, a $G_1$-equivariant map $A_1 \overset{f_1}\to B_1$ induces a unique $G_2$ equivariant map $A_2 \overset{f_2}\to B_2$.
\[ \xymatrix{A_1 \ar[r]^{\alpha}\ar[d]^{f_1}&A_2\ar@{-->}[d]^{f_2} \\
B_2 \ar[r]^{\beta}& B_2}\]
\begin{proof}
For $a_2 \in A_2$, choose some element $a_1 \in \alpha^{-1}(a_2) \subset A_1$.  Define $f_2(a_2) \= \beta\left( f_1(a_1)\right)$.  This is well-defined since any other choice $a_1' = g_0 a_1$ for a unique $g_0 \in G_0 = \Ker (G_1 \to G_2)$, and 
\[ \beta\left( f_1(g_0 a_1) \right) = \beta \left( g_0 f_1(a_1) \right) = \beta \left(f_1(a_1) \right). \]
The $G_2$-equivariance of $f_2$ follows by a similar argument.  If $g_1 \mapsto g_2$, then
\[ f_2(g_2 a_2) = \beta( f_1( g_1 a_1)) = g_2 f_2(a_2).\qedhere\]
\end{proof} \end{lemma}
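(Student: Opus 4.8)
The plan is to define $f_2$ by lifting along $\alpha$, transporting through $f_1$, and descending through $\beta$, then to check that this recipe is well-defined, that it makes the square commute, that it is $G_2$-equivariant, and that it is the only such map. The first step I would record is that $\alpha$ is surjective with fibers controlled by $G_0 = \ker(G_1 \to G_2)$. Surjectivity follows from the torsor axioms: fixing any $a_1 \in A_1$, every element of $A_2$ is $g_2 \cdot \alpha(a_1)$ for a unique $g_2 \in G_2$, and since $G_1 \to G_2$ is surjective we may write $g_2 = \bar{g_1}$ and use equivariance of $\alpha$ to get $g_2 \cdot \alpha(a_1) = \alpha(g_1 \cdot a_1)$. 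For the fibers, if $\alpha(a_1') = \alpha(a_1)$, then freeness of the $G_1$-action produces a unique $g_1$ with $a_1' = g_1 \cdot a_1$, and equivariance forces $\bar{g_1} = e$, i.e. $g_1 \in G_0$; conversely any $g_0 \in G_0$ preserves the $\alpha$-image.

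With these two facts in hand I would set $f_2(a_2) := \beta(f_1(a_1))$ for any chosen $a_1 \in \alpha^{-1}(a_2)$. Well-definedness is the one genuine computation: replacing $a_1$ by $g_0 \cdot a_1$ with $g_0 \in G_0$ changes the value to $\beta(f_1(g_0 a_1)) = \beta(g_0 \cdot f_1(a_1)) = \bar{g_0} \cdot \beta(f_1(a_1)) = \beta(f_1(a_1))$, using in turn the $G_1$-equivariance of $f_1$, the $G_1 \to G_2$-equivariance of $\beta$, and $\bar{g_0} = e$. Commutativity of the square is then immediate, since choosing the preimage of $\alpha(a_1)$ to be $a_1$ itself gives $f_2(\alpha(a_1)) = \beta(f_1(a_1))$, i.e. $f_2 \circ \alpha = \beta \circ f_1$.

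For $G_2$-equivariance, given $g_2 \in G_2$ and $a_2 \in A_2$, I would lift $g_2$ to some $g_1 \in G_1$ and $a_2$ to some $a_1 \in \alpha^{-1}(a_2)$; then $g_1 \cdot a_1$ is a preimage of $g_2 \cdot a_2$, so $f_2(g_2 a_2) = \beta(f_1(g_1 a_1)) = \bar{g_1} \cdot \beta(f_1(a_1)) = g_2 \cdot f_2(a_2)$ by the same two equivariance properties. Uniqueness needs only surjectivity of $\alpha$: any $f_2'$ making the square commute must satisfy $f_2'(a_2) = f_2'(\alpha(a_1)) = \beta(f_1(a_1))$ whenever $a_2 = \alpha(a_1)$, and since every $a_2$ is so expressible, $f_2' = f_2$. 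The only point where anything could go wrong is the well-definedness step, so the main (and quite modest) obstacle is confirming that the ambiguity in the choice of preimage $a_1$ lies in $G_0$ and is annihilated after passing through $\beta$; every other assertion is a direct consequence of the torsor axioms together with the two equivariance hypotheses.
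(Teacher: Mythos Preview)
Your proof is correct and follows the same approach as the paper: define $f_2$ by lifting along $\alpha$, applying $f_1$, and descending via $\beta$, then verify well-definedness using that the ambiguity lies in $G_0$ and is killed by the equivariance of $\beta$. Your version is in fact more complete than the paper's, which leaves the surjectivity of $\alpha$ and the uniqueness claim implicit; you spell both out explicitly.
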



\bibliographystyle{alphanum}
\bibliography{MyBibDesk}

\end{document}